\newtheorem{theorem}{Theorem}[section]
\newtheorem{lemma}[theorem]{Lemma}
\newtheorem{remark}[theorem]{Remark}
\newtheorem{definition}[theorem]{Definition}
\newtheorem{conj}[theorem]{Conjecture}
\newcommand{\R}{\mathcal{R}}
\newcommand{\BbN}{\mathbb{N}}
\newcommand{\la}{\lambda}
\newcommand{\Gtrue}{98.3\%}
\newcommand{\Htrue}{91.8\%}
\newcommand{\HGtrue}{91.3\%}
\numberwithin{figure}{section}
\begin{document}
\title[Non-fractal tops]{On a family of self-affine IFS whose attractors have a non-fractal top \\ \today}
\date{\today}
\author[K. G. Hare]{Kevin G. Hare}
\address{Department of Pure Mathematics, University of Waterloo, Waterloo, Ontario, Canada N2L 3G1}
\email{kghare@uwaterloo.ca}
\thanks{Research of K.G. Hare was supported by NSERC Grant 2019-03930}
\author[N. Sidorov]{Nikita Sidorov}
\address{Department of Mathematics, The University of Manchester, Manchester, M13 9PL, United Kingdom}
\email{sidorov@manchester.ac.uk}
\thanks{Research of N. Sidorov was supported by in part by the University of Waterloo}
\subjclass[2010]{28A80}
\keywords{Iterated function system, boundary}

\maketitle

\begin{abstract}
Let $0< \lambda < \mu<1$ and $\lambda+\mu>1$. In this note we prove that for the vast majority of such parameters the top of the attractor $A_{\lambda,\mu}$ of the IFS $\{(\lambda x,\mu y), (\mu x+1-\mu, \lambda y+1-\lambda)\}$ is the graph of a continuous, strictly increasing function.
Despite this, for most parameters, $A_{\lambda, \mu}$ has a box dimension strictly greater than 1, showing that the upper boundary is not representative
    of the complexity of the fractal.
Finally, we prove that if $\lambda \mu\ge 2^{-1/6}$, then $A_{\lambda,\mu}$ has a non-empty interior.
\end{abstract}

\section{Introduction}

Self-affine iterated function systems (IFS) are well studied. When such an IFS is given by a single matrix, e. g., $\{Mx, Mx+u\}$, it appears that all of its boundary is fractal, though there are no rigorous results in this direction, to our best knowledge. The purpose of this note is to present a family of two-dimensional IFS for which their attractors have a different kind of boundary for the top and the bottom. In particular, their tops are not fractal.

Assume $0< \lambda < \mu<1$ and $\lambda+\mu>1$. Put
\[
T_0(x,y)=(\lambda x, \mu y),\ T_1(x,y)=(\mu x+1-\mu,\lambda y+1-\lambda).
\]
Let $A_{\lambda,\mu}$ denote the attractor for the IFS $\{T_0, T_1\}$.
Notice that $A_{\lambda, \mu} \subset [0,1]\times[0,1]$ -- see Figure~\ref{fig:Alamu}.
Based upon visual inspection of such sets, one would expect that $A_{\lambda, \mu}$ would have
    dimension strictly greater than 1.
Despite this, it also surprisingly appears that the top of this IFS is one-dimensional. This is in stark contrast with the family of IFS $\{(\la x,\mu y),(\la x+1-\la, \mu y+1-\mu)\}$ studied in detail in \cite{HS-ETDS}.

Put
\[
\partial_{top}(A_{\lambda,\mu})=\{(x,y)\in A_{\lambda,\mu} : \forall (x,y')\in
A_{\lambda,\mu}\ \mbox{we have } y'\le y\}.
\]

We will define a closed subset
    $G \subset \{(\lambda, \mu): \lambda + \mu > 1,
    0 < \lambda < \mu < 1 \}$ in Section~\ref{sec:comp} for which $\partial_{top}(A_{\lambda,\mu})$ is strictly increasing and continuous.
This set $G$ has the property that it is at least \Gtrue\ of the 
    parameter space $\{(\lambda, \mu): \lambda + \mu > 1,
    0 <\lambda < \mu < 1 \}$.

We have three main results. The first is 
\begin{theorem}\label{thm1}
For all $(\lambda, \mu) \in G$ we have
    the set $\partial_{top}(A_{\lambda,\mu})$ is the graph of a continuous,
    strictly increasing function.
\end{theorem}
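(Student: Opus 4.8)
\medskip
\noindent\textit{Proof strategy.}\quad
Since $\pi_x\!\bigl(T_0(A_{\lambda,\mu})\bigr)=[0,\lambda]$, $\pi_x\!\bigl(T_1(A_{\lambda,\mu})\bigr)=[1-\mu,1]$ and $\lambda+\mu>1$, the projection of $A_{\lambda,\mu}$ onto the first coordinate is all of $[0,1]$, so $\partial_{top}(A_{\lambda,\mu})$ is the graph of a genuine function $f(x):=\max\{y:(x,y)\in A_{\lambda,\mu}\}$ with $f(0)=0$, $f(1)=1$ (the images of the fixed points of $T_0$ and $T_1$), and $f$ is upper semicontinuous since it is the top of a compact set. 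From $A_{\lambda,\mu}=T_0(A_{\lambda,\mu})\cup T_1(A_{\lambda,\mu})$ and the coordinatewise monotonicity of $T_0,T_1$ I would first extract the functional equation
\[
f(x)=\max\Bigl\{\mu f(x/\lambda)\ \text{(for }x\le\lambda\text{)},\ \ \lambda f\bigl(\tfrac{x-1+\mu}{\mu}\bigr)+1-\lambda\ \text{(for }x\ge 1-\mu\text{)}\Bigr\}=:(\Phi f)(x).
\]
The operator $\Phi$ is a $\mu$-contraction on the bounded functions on $[0,1]$ (its two affine pieces have vertical ratios $\mu<1$ and $\lambda<1$), so $f$ is its unique fixed point; the whole problem is thus the regularity of this fixed point.

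\medskip
The plan is to produce a nonempty, closed, $\Phi$-invariant class $\mathcal K\subseteq C[0,1]$ of non-decreasing functions $[0,1]\to[0,1]$ fixing the endpoints; by uniqueness the fixed point then lies in $\mathcal K$, giving continuity and monotonicity. The key dichotomy is that on the two ``free'' regions $[0,1-\mu)$ (only the $T_0$-branch is present) and $(\lambda,1]$ (only the $T_1$-branch) the map $\Phi$ automatically turns a continuous increasing function into one, and on the overlap $I:=[1-\mu,\lambda]$ a maximum of the two strictly increasing affine-composed branches is again continuous and strictly increasing; the only possible defect of $\Phi g$ is a jump at an endpoint of $I$, where the dominant branch switches. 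A short computation shows $\Phi g$ is continuous precisely when the two \emph{matching inequalities}
\[
\mu\, g\!\left(\tfrac{1-\mu}{\lambda}\right)\ \ge\ 1-\lambda,
\qquad
\lambda\, g\!\left(\tfrac{\lambda+\mu-1}{\mu}\right)+1-\lambda\ \ge\ \mu
\]
hold: the first says that over $x=1-\mu$ the $T_0$-image of the top is at least as high as the $T_1$-image (whose height there is that of the corner image $T_1(0,0)=(1-\mu,1-\lambda)$), the second that over $x=\lambda$ the $T_1$-image of the top dominates the $T_0$-image (whose height there is that of $T_0(1,1)=(\lambda,\mu)$).

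\medskip
Because these are constraints on the fixed point itself, I would build into the definition of $\mathcal K$ an explicit continuous non-decreasing sub-solution $\ell$ of $\Phi$, i.e.\ a function with $\ell(0)=0$, $\ell(1)=1$, $\Phi\ell\ge\ell$, satisfying the two matching inequalities, and set $\mathcal K=\{g\in C[0,1]:g\ \text{non-decreasing},\ g(0)=0,\ g(1)=1,\ g\ge\ell\}$. Then every $g\in\mathcal K$ inherits the matching inequalities (they are pointwise lower bounds and $g\ge\ell$), so $\Phi g$ is continuous and increasing, while $\Phi g\ge\Phi\ell\ge\ell$ by monotonicity of $\Phi$; hence $\Phi(\mathcal K)\subseteq\mathcal K$, the class contains $\ell$ and is closed, so $f\in\mathcal K$. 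Taking $\ell$ piecewise linear makes $\Phi\ell$ piecewise linear, so $\Phi\ell\ge\ell$ and the two matching inequalities reduce to a \emph{finite}, rigorously verifiable (interval-arithmetic) check; the set $G$ of Section~\ref{sec:comp} is exactly the set of parameters for which such an $\ell$ exists. Strict monotonicity of $f$ is then obtained by a standard pull-back argument: each affine branch expands the $x$-coordinate, so a nondegenerate horizontal segment on $\partial_{top}(A_{\lambda,\mu})$ would pull back through one branch to a strictly longer one, and iterating would overflow $[0,1]$, contradicting $f(0)=0<1=f(1)$.

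\medskip
I expect the main obstacle to be genuine control of the overlap $I$: the hypothesis $\lambda+\mu>1$ is precisely what forces $\mu>1-\lambda$, so the $T_0$-branch of the top can (and for some parameters does) rise above the $T_1$-branch somewhere on $I$, and one cannot simply identify $\partial_{top}(A_{\lambda,\mu})$ with the $T_1$-image of itself. The content concentrated in the definition of $G$ is to exhibit a clean, finitely checkable sufficient condition — the sub-solution $\ell$ — guaranteeing the matching inequalities, and then to verify computationally that $G$ is closed and occupies at least \Gtrue\ of the region $\{0<\lambda<\mu<1,\ \lambda+\mu>1\}$.
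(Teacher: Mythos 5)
Your strategy is sound and, at its core, it is the same idea as the paper's, but the technical implementation is genuinely different, and one ingredient is left unconstructed. The paper works with compact sets rather than functions: it first builds an explicit continuous, strictly increasing curve $B_{\lambda,\mu}\subset A_{\lambda,\mu}$ from $(0,0)$ to $(1,1)$ as the maximal attractor of truncated maps $S_0,S_1$ (Lemma \ref{lem:B}), defines $G$ by the condition that $T_0(1,1)$ and $T_1(0,0)$ lie below $B_{\lambda,\mu}$, and then iterates the set operator $\R(A)=\partial_{top}(T_0(A)\cup T_1(A))$ starting from $B_{\lambda,\mu}$, showing the iterates are continuous increasing curves that increase to $\partial_{top}(A_{\lambda,\mu})$; continuity of the limit is handled by a separate scaling argument (the supremum $M$ of jump sizes satisfies $M=\lambda M$, so $M=0$), and strict monotonicity by the symmetry of $A_{\lambda,\mu}$ about $x+y=1$. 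Your version replaces this with a contraction $\Phi$ on bounded functions plus a closed invariant class $\mathcal K$, which buys you something real: you do not need the Hausdorff-limit step or the jump-supremum argument, since the unique fixed point of a contraction automatically lies in any nonempty closed invariant class. Your matching inequalities are exactly the right continuity conditions at $x=1-\mu$ and $x=\lambda$ (the paper needs the same facts in the inductive step of Lemma \ref{lem:C}, where it asserts that $T_0(1,1)$ lies below $T_1(R_{n-1})$), and your pull-back argument for strict monotonicity (a flat segment of the top pulls back to a strictly longer flat segment, eventually overflowing $[0,1]$) is a clean alternative to the paper's symmetry argument.

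The piece you have not supplied is the sub-solution $\ell$, and this is precisely where the paper's content lies: your $\ell$ is the paper's $B_{\lambda,\mu}$. Since $B_{\lambda,\mu}=S_0(B_{\lambda,\mu})\cup S_1(B_{\lambda,\mu})$ with $S_i$ agreeing with $T_i$ off two corner points, $B_{\lambda,\mu}$ is a continuous increasing sub-solution of your $\Phi$; moreover, because $T_1$ carries the point of $B_{\lambda,\mu}$ with abscissa $(\lambda+\mu-1)/\mu$ to the point of $B_{\lambda,\mu}$ with abscissa $\lambda$, your second matching inequality for $\ell=B_{\lambda,\mu}$ is \emph{equivalent} to ``$T_0(1,1)$ lies below $B_{\lambda,\mu}$'' (and similarly for the first), i.e.\ to the paper's Definition \ref{defn:G}. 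So to prove the theorem as stated (for the paper's $G$) you must either take $\ell=B_{\lambda,\mu}$ and verify these two facts, or show that your piecewise-linear sub-solutions exist for every $(\lambda,\mu)\in G$; as written, your argument proves the theorem for the (a priori different) set of parameters admitting such an $\ell$. This is a packaging issue rather than a logical error, but it is the one step that cannot be omitted.
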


It appears computationally that we can construct a $G$ arbitrarily close to 
    the full parameter space.
From this we make the 
\begin{conj}
For all $0 < \lambda < \mu < 1$ with $\lambda + \mu > 1$
    the set $\partial_{top}(A_{\lambda,\mu})$ is the graph of a continuous,
    strictly increasing function.
\end{conj}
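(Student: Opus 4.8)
The plan is to exhibit the top as the unique fixed point of an explicit de Rham–type functional equation and then to reduce the whole statement to a single self-similar family of parameter inequalities. Set $f(x)=\max\{y:(x,y)\in A_{\lambda,\mu}\}$, which exists because $A_{\lambda,\mu}$ is compact; then $\partial_{top}(A_{\lambda,\mu})$ is the graph of $f$ by definition, so the conjecture asserts that $f$ is continuous and strictly increasing. Using $A_{\lambda,\mu}=T_0(A_{\lambda,\mu})\cup T_1(A_{\lambda,\mu})$ and the fact that the $x$-ranges of $T_0,T_1$ are $[0,\lambda]$ and $[1-\mu,1]$, one obtains
\[
f(x)=\begin{cases}
g_0(x), & 0\le x<1-\mu,\\
\max\{g_0(x),g_1(x)\}, & 1-\mu\le x\le\lambda,\\
g_1(x), & \lambda<x\le1,
\end{cases}
\]
with $g_0(x)=\mu f(x/\lambda)$ and $g_1(x)=(1-\lambda)+\lambda f\big(\tfrac{x-1+\mu}{\mu}\big)$. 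The right-hand side is an operator $\Phi$ on bounded functions, and since $|\max(a,b)-\max(a',b')|\le\max(|a-a'|,|b-b'|)$ it contracts the sup norm by a factor $\mu<1$; thus $f$ is its unique bounded fixed point, with $f(0)=0$, $f(1)=1$. Approximating $f$ by the Banach iterates $f_n=\Phi^nf_0$ (starting, say, from $f_0(x)=x$) reduces everything to transporting monotonicity and continuity through $\Phi$.

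Away from the two seams $x=1-\mu$ and $x=\lambda$ the function $f$ inherits monotonicity and continuity for free: on each of the three regions it is $g_0$, $g_1$, or $\max\{g_0,g_1\}$, and a maximum of two non-decreasing continuous functions is non-decreasing and continuous. The only obstructions sit at the seams and, through the rescaled arguments $x/\lambda$ and $(x-1+\mu)/\mu$, at all of their self-similar pre-images. A direct computation of the one-sided limits shows that continuity at $x=1-\mu$ is equivalent to
\[
\mu\, f\Big(\tfrac{1-\mu}{\lambda}\Big)\ \ge\ 1-\lambda,
\]
while at $x=\lambda$ both continuity and monotonicity hold if and only if $g_0(\lambda)\le g_1(\lambda)$, i.e.
\[
f\Big(\tfrac{\lambda+\mu-1}{\mu}\Big)\ \ge\ \tfrac{\lambda+\mu-1}{\lambda}.
\]
(The jump at $1-\mu$, if present, points upward and so spoils only continuity; the jump at $\lambda$, if present, points downward and spoils both.) Because $g_0,g_1$ feed $f$ back at the rescaled points, these two inequalities recur verbatim along the entire nested hierarchy of pre-images of the overlap $[1-\mu,\lambda]$, so the conjecture is equivalent to this one self-similar family of inequalities holding simultaneously.

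The mechanism that should make every such inequality true is the pronounced concavity of $f$ near the corner $0$: on $[0,1-\mu)$ the relation $f(x)=\mu f(x/\lambda)$ forces $f(x)\asymp x^{\alpha}$ with $\alpha=\log\mu/\log\lambda<1$, so $f$ rises steeply away from $0$ and the left-hand values such as $f\big(\tfrac{1-\mu}{\lambda}\big)$ are large enough to dominate the $T_1$-contribution $1-\lambda$. I would make this rigorous by constructing an explicit non-decreasing subsolution $\underline f\le f$ — a power function matched at the corner, possibly corrected on the overlap — that provably satisfies both seam inequalities and is preserved by $\Phi$, so that $\underline f\le f$ persists under iteration and the inequalities are inherited by $f$ itself.

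The main obstacle, and precisely the gap between the numerically certified set $G$ and the full conjecture, is making this subsolution uniform over the whole parameter triangle. Two degenerations must be controlled: as $\lambda+\mu\to1^+$ the overlap $[1-\mu,\lambda]$ collapses to a point and the competition between $g_0$ and $g_1$ becomes arbitrarily delicate, while as $\mu-\lambda\to0^+$ the two contraction ratios coalesce and the self-similar hierarchy of seams becomes infinitely deep with no uniform finite truncation — which is exactly why a finite numerical certificate can only cover a proportion of the parameters. I expect the decisive step to be a renormalisation/self-consistency argument showing that the seam inequalities are self-propagating: if $f$ is continuous and sufficiently concave on the first-generation overlap, then the branch maps $x\mapsto\lambda x$ and $x\mapsto\mu x+1-\mu$ transport this property to every deeper generation, closing the induction uniformly. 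Finally, strict monotonicity follows by a standard self-similar contradiction: a maximal horizontal segment on the graph of $f$ would pull back under $T_0^{-1}$ or $T_1^{-1}$ to a strictly longer horizontal segment, contradicting maximality. Together these steps would promote $f$ to a continuous, strictly increasing function for every admissible $(\lambda,\mu)$.
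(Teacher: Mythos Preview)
This statement appears in the paper as a \emph{conjecture}; the paper does not prove it. What the paper does prove is the partial result Theorem~\ref{thm1}: the conclusion holds for $(\lambda,\mu)$ in a computationally certified set $G$ covering about $98\%$ of the parameter triangle. There is therefore no paper proof to compare against, and a complete argument would be new mathematics.

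Your proposal is a programme, not a proof, and it is essentially the paper's partial approach recast in functional language. Your operator $\Phi$ is the paper's operator $\R(A)=\partial_{top}(T_0A\cup T_1A)$ written on graphs; your two seam inequalities are exactly the statement that the points $T_1(0,0)=(1-\mu,1-\lambda)$ and $T_0(1,1)=(\lambda,\mu)$ lie on or below the relevant curve, which is precisely the paper's defining condition for $G$ (imposed there on the auxiliary curve $B_{\lambda,\mu}$ rather than on $f$ itself, to avoid the circularity of bounding $f$ by $f$). Your proposed subsolution $\underline f$ would play exactly the role of $B_{\lambda,\mu}$. The concrete gap is that you never construct $\underline f$: the power-law heuristic $f(x)\asymp x^{\log\mu/\log\lambda}$ near $0$ is suggestive but is not shown to produce a genuine subsolution satisfying $\Phi\underline f\ge\underline f$ together with both seam inequalities uniformly in $(\lambda,\mu)$, and the ``renormalisation/self-consistency argument'' you say you \emph{expect} to close the induction is not supplied. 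You explicitly flag this as ``the main obstacle''; without that step the proposal is a reformulation of the open problem, not a resolution of it. A smaller issue: the strict-monotonicity pull-back at the end needs care on the overlap $[1-\mu,\lambda]$, since $f=\max(g_0,g_1)$ there and constancy of a maximum does not force constancy of either branch.
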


We have
\begin{theorem}\label{thm2}
There exists $(\lambda, \mu)$ with $0 < \lambda < \mu < 1, \lambda+\mu > 1$ such that 
    the set $A_{\lambda,\mu}$ has dimension strictly greater than $1$.
\end{theorem}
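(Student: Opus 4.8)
I plan to prove Theorem~\ref{thm2} by exhibiting, for one carefully chosen pair $(\lambda,\mu)$, a subset of $A_{\lambda,\mu}$ that is itself the attractor of a finite IFS satisfying the open set condition and of dimension strictly greater than $1$; monotonicity of Hausdorff (and hence box) dimension under inclusion then finishes. The structural fact I would exploit is that for a word $w=w_1\cdots w_n\in\{0,1\}^n$ the composition $T_w:=T_{w_1}\circ\cdots\circ T_{w_n}$ has diagonal linear part $\operatorname{diag}(\lambda^{a}\mu^{\,n-a},\mu^{a}\lambda^{\,n-a})$, where $a$ is the number of zeros in $w$; in particular \emph{every} cylinder rectangle $R_w:=T_w([0,1]^2)$ has the same area $(\lambda\mu)^n$, and when $w$ is \emph{balanced} ($a=n/2$, $n$ even) its linear part is the scalar $(\lambda\mu)^{n/2}I$, so that $T_w$ is a genuine similarity of ratio $\rho_n:=(\lambda\mu)^{n/2}$.

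I would therefore search for a set $W$ of balanced words of length $n$ whose rectangles $\{R_w:w\in W\}$ are pairwise non-overlapping. Then $\{T_w:w\in W\}$ is a self-similar system of $|W|$ maps of common ratio $\rho_n$ satisfying the open set condition, so by Moran's theorem its attractor $A_W\subseteq A_{\lambda,\mu}$ satisfies
\[
\dim_H A_W=\frac{\log|W|}{\log(1/\rho_n)}=\frac{2\log|W|}{\,n\log\!\bigl(1/(\lambda\mu)\bigr)},
\]
which exceeds $1$ exactly when $|W|>(\lambda\mu)^{-n/2}$. Since the $R_w$ have area $(\lambda\mu)^n$ and lie inside $[0,1]^2$, any non-overlapping family has at most $(\lambda\mu)^{-n}$ members, so I am after a non-overlapping family of balanced words with $(\lambda\mu)^{-n/2}<|W|\le(\lambda\mu)^{-n}$ — a window that is genuinely open for large $n$, and into which the supply $\binom{n}{n/2}\sim 2^{n}/\sqrt{\pi n/2}$ of balanced words reaches comfortably as soon as $1/4<\lambda\mu<1/2$, a range I am free to choose (for instance $\lambda=0.6,\ \mu=0.7$, where also $\lambda+\mu>1$).

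The crux — and the step I expect to be the real obstacle — is to show that such a non-overlapping family actually exists: equivalently, one must control the overlap multiplicity of the balanced-word rectangles. The point is that more than $O\!\bigl((\lambda\mu)^{-n/2}\bigr)$ pairwise non-overlapping squares of side $\rho_n$ cannot be carried by a single line, so the construction is forced to use the two-dimensionality of $[0,1]^2$, which is precisely where the $x$–$y$ asymmetry of $T_1$ must enter. Concretely, $z\in R_w$ forces the first coordinate of $z$ to lie in the $x$-cylinder $f_w([0,1])$ of the overlapping one-dimensional system $\{x\mapsto\lambda x,\ x\mapsto\mu x+1-\mu\}$ and the second coordinate in the cylinder of the ``transposed'' system $\{y\mapsto\mu y,\ y\mapsto\lambda y+1-\lambda\}$; so I would bound the maximal number of balanced words whose $x$-cylinder contains a given point, and then use a greedy/chromatic selection to extract a non-overlapping subfamily of the required size $|W|\gg(\lambda\mu)^{-n/2}$. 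For the purely existential claim of Theorem~\ref{thm2} one may instead sidestep any general estimate and simply verify the needed non-overlap for one explicit $(\lambda,\mu)$ and one moderately large $n$ by a finite computation, in the spirit of the set $G$ of Section~\ref{sec:comp}.

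Two observations support the plan. First, the projection of $A_{\lambda,\mu}$ to the $x$-axis is the attractor of $\{x\mapsto\lambda x,\ x\mapsto\mu x+1-\mu\}$, whose two images $[0,\lambda]$ and $[1-\mu,1]$ already cover $[0,1]$ because $\lambda+\mu>1$; hence this projection is all of $[0,1]$ and $\dim_H A_{\lambda,\mu}\ge1$ for every admissible pair, so only the \emph{strict} inequality is at stake. An alternative route to the strict inequality is via fibres: by the Fubini-type inequality $\dim_H A_{\lambda,\mu}\ge 1+s$ whenever the vertical slices $A_{\lambda,\mu}\cap(\{x\}\times\mathbb{R})$ have Hausdorff dimension $\ge s$ for $x$ in a set of positive Lebesgue measure, it would suffice to find one pair for which a positive-measure set of slices is positive-dimensional — which again reduces to the overlap structure, since the slice over $x$ is the image under the $y$-coding map of the symbolic fibre of $x$ for the $x$-coding map, and $\lambda+\mu>1$ makes that symbolic fibre positive-dimensional for typical $x$. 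Second, as a sanity check, both Lyapunov exponents of the random product of the linear parts of $T_0$ and $T_1$ equal $\tfrac12\log(1/(\lambda\mu))$, so a typical long composition is nearly a similarity of ratio $(\lambda\mu)^{n/2}$; this makes it plausible that $\dim_H A_{\lambda,\mu}=\min\{2,\,2\log 2/\log(1/(\lambda\mu))\}$ in the absence of degenerate overlaps, which is $>1$ exactly for $\lambda\mu>1/4$ — consistent with the subsystem computation above.
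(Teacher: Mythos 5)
Your overall strategy -- pass to a sub-IFS inside $A_{\lambda,\mu}$, verify a separation condition, and invoke a dimension formula plus monotonicity -- is exactly the right shape, and your reductions (projection to the $x$-axis gives $\dim\ge 1$; balanced words give genuine similarities of ratio $(\lambda\mu)^{n/2}$; Moran's theorem would give $\dim>1$ once $|W|>(\lambda\mu)^{-n/2}$) are all correct. But the argument has a genuine gap at precisely the step you flag as ``the real obstacle'': you never establish that a pairwise non-overlapping family of balanced-word rectangles of size greater than $(\lambda\mu)^{-n/2}$ exists. The counting window $(\lambda\mu)^{-n/2}<|W|\le(\lambda\mu)^{-n}$ being nonempty, and the supply $\binom{n}{n/2}$ being large, says nothing about whether that many of the squares $R_w$ can actually be chosen disjoint: the $x$-cylinders come from the heavily overlapping system $\{\lambda x,\ \mu x+1-\mu\}$ with $\lambda+\mu>1$, and already for $n=2$ (with your suggested $\lambda=0.6$, $\mu=0.7$) the two balanced rectangles $R_{01}$ and $R_{10}$ overlap in both coordinates. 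The proposed fixes -- a multiplicity bound on how many balanced $x$-cylinders contain a given point followed by a greedy selection, or an unspecified finite computation for one $(\lambda,\mu)$ and one $n$ -- are sketches of a plan, not a proof; neither the multiplicity bound nor the explicit verified family is supplied, and it is not evident either would succeed. The Fubini-on-slices alternative has the same status: you reduce it back to ``the overlap structure'' without resolving it.

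The paper sidesteps this difficulty entirely by not insisting on similarities. It takes the two-map self-affine sub-IFS $\{T_0T_1,\ T_1\}$ (for $\lambda=0.4$, $\mu=0.9$), for which the rectangular open set condition can be checked by hand on the square $X=[x_0,1]\times[y_0,1]$ built from the fixed point of $T_0T_1$, notes that the $x$-projection of the sub-attractor is a full interval, and then applies the Feng--Wang dimension formula $\sum b_i^{s-1}a_i=1$ for diagonal self-affine systems; since the left side exceeds $1$ at $s=1$ exactly because the $x$-projections overlap, one gets $s>1$ (numerically $s\approx1.244$). The lesson is that allowing non-conformal pieces lets one work with so few maps that disjointness is trivially verifiable, while the overlap needed to push the dimension above $1$ is absorbed into the projection condition of Feng--Wang rather than fought combinatorially. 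If you want to keep your balanced-word framework, you must either prove the separation claim or replace Moran with a tool (such as Feng--Wang) that tolerates overlapping projections; as it stands, the theorem is not proved.
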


In fact Theorem \ref{thm2} is stronger than this.  
We give a range of parameters, making up \Htrue\ of the 
    parameter space $\{(\lambda, \mu): 0< \lambda < \mu < 1, \lambda+\mu>1\}$
    for which $A_{\lambda,\mu}$ has dimension strictly greater than 1.
In fact the range of paramters that satisfy both Theorem \ref{thm1} and 
    \ref{thm2} makes up \HGtrue\ of the parameter space.
Unfortunately the technique used in Theorem \ref{thm2} probably cannot 
    be extended arbitrarily close to 100\%, as we will discuss later.
We observe that if $\lambda \mu < 1/2$ then we necessarily have $\dim(A_{\lambda, \mu}) < 2$ and hence
    all points are boundary points.
This reinforces the observation that the upper boundary of $A_{\lambda,  \mu}$ is not representative of the 
    boundary of $A_{\lambda, \mu}$.

Although the technique does not appear to extend to all parameters $(\lambda, \mu)$, we still believe
\begin{conj}
For all $0 < \lambda < \mu < 1$ with $\lambda + \mu > 1$
    the set $A_{\lambda,\mu}$ has dimension strictly greater than $1$.
\end{conj}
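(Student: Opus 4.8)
To attack this conjecture, the plan is to establish the stronger quantitative statement that the upper box dimension exceeds $1$ by a fixed amount, via a column-counting lower bound combined with an analysis of the vertical fibres. First I would record the elementary observation that both coordinate projections of $A_{\lambda,\mu}$ are the full unit interval: the $x$-projection is the attractor of the self-similar IFS $\{x\mapsto\lambda x,\ x\mapsto \mu x+1-\mu\}$, whose two defining intervals $[0,\lambda]$ and $[1-\mu,1]$ cover $[0,1]$ precisely because $\lambda+\mu>1$, and symmetrically for the $y$-projection. Consequently, partitioning $[0,1]^2$ into a $\delta\times\delta$ grid, every one of the $\delta^{-1}$ columns meets $A_{\lambda,\mu}$, and the number of occupied cells satisfies
\[
N_\delta(A_{\lambda,\mu}) \;\ge\; \sum_{j} N_\delta\big(A_{x_j}\big),
\]
where $A_x=\{y:(x,y)\in A_{\lambda,\mu}\}$ is the vertical fibre over $x$ and $x_j$ is one representative per column. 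Thus if I can show that for a definite proportion of columns the fibre satisfies $N_\delta(A_{x_j})\ge \delta^{-\epsilon}$ for some fixed $\epsilon>0$, I obtain $N_\delta(A_{\lambda,\mu})\gtrsim \delta^{-1-\epsilon}$ and hence $\overline{\dim}_B A_{\lambda,\mu}\ge 1+\epsilon>1$. The whole problem is therefore reduced to proving that the typical fibre has positive box dimension.

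To analyse the fibres I would pass to the symbolic coding $\omega\in\{0,1\}^{\BbN}$ and track the two coordinates separately. For a word $w$ of length $n$ with $k$ zeros, the cell $T_w([0,1]^2)$ is a rectangle of width $\lambda^k\mu^{n-k}$ and height $\mu^k\lambda^{n-k}$. The total width of all $n$-cells is $\sum_{k}\binom{n}{k}\lambda^k\mu^{n-k}=(\lambda+\mu)^n$, so on average a point $x$ is covered by about $(\lambda+\mu)^n$ of the $x$-intervals, a multiplicity that grows exponentially because $\lambda+\mu>1$. Each such word deposits a $y$-interval into the fibre $A_x$, and the hoped-for mechanism is that these exponentially many intervals are spread out rather than piled up, so that $A_x$ contains of order $(\lambda+\mu)^n$ essentially distinct pieces at vertical scale $\mu^k\lambda^{n-k}$; this would yield fibre dimension of order $\log(\lambda+\mu)/(-\log\text{scale})>0$ and hence the desired $\epsilon$. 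It is reassuring that the same threshold arises from the affinity dimension: since the diagonal linear parts of $T_0$ and $T_1$ share the singular values $\mu>\lambda$ and commute, the linear part of $T_w$ is $\mathrm{diag}(\lambda^k\mu^{n-k},\mu^k\lambda^{n-k})$, and a short Laplace-method computation gives, for the top singular value $\sigma_1$,
\[
\lim_{n\to\infty}\tfrac1n\log\sum_{|w|=n}\sigma_1(T_w)=\max_{0\le t\le 1/2}\big(H(t)+t\log\lambda+(1-t)\log\mu\big)=\log(\lambda+\mu),
\]
attained at $t=\lambda/(\lambda+\mu)$, where $H$ is the binary entropy. This is strictly positive exactly when $\lambda+\mu>1$, so the affinity dimension always exceeds $1$. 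Since the affinity dimension is only an upper bound, this serves as a guide rather than a proof, and the genuine content is the matching lower bound on the fibres.

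The step I expect to be the main obstacle is precisely the claim that the $y$-intervals coming from the $\approx(\lambda+\mu)^n$ words covering a fixed $x$ do not collapse onto one another. This is an overlap problem of the same nature as those for overlapping self-similar sets and Bernoulli convolutions: for exceptional algebraic pairs $(\lambda,\mu)$ one cannot a priori exclude exact overlaps, in which many distinct words produce coincident $y$-values, cutting the fibre count well below $(\lambda+\mu)^n$ and in principle driving the fibre dimension to zero. Proving an effective separation --- a transversality estimate showing that at most subexponentially many words can share a given $y$-location up to scale $\mu^k\lambda^{n-k}$ --- for \emph{every} admissible $(\lambda,\mu)$ is exactly what is missing, and is why the statement remains a conjecture; it is also consistent with the remark that the method behind Theorem~\ref{thm2}, which sidesteps this issue for a large but incomplete family, is not expected to reach $100\%$ of the parameter space. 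For a Hausdorff rather than box version one would additionally distribute a measure on $A_{\lambda,\mu}$ realising the fibre entropy through a Ledrappier--Young type argument, but the overlap obstruction is the same.
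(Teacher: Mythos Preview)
The statement you are addressing is presented in the paper as a \emph{conjecture}, not a theorem; the paper gives no proof and explicitly says that the technique behind Theorem~\ref{thm2} ``does not appear to extend to all parameters.'' Your write-up is consistent with this: you lay out a strategy and then correctly isolate the step that does not close, namely the separation estimate showing that the $y$-intervals deposited in a fibre $A_x$ by the $\approx(\lambda+\mu)^n$ words covering $x$ do not collapse. So what you have is not a proof, and you say so yourself; the affinity-dimension calculation is correct and shows the conjecture is consistent with Falconer's upper bound, but it supplies no lower bound.

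It is still worth contrasting your line of attack with what the paper actually does for the partial result Theorem~\ref{thm2}. There the method is to exhibit, for each $(\lambda,\mu)$ in a large explicit sub-region, a finite sub-IFS $\{T_{w_1},\dots,T_{w_n}\}$ (e.g.\ $\{T_0T_1^m,T_1^n\}$) whose images of a suitable rectangle are pairwise disjoint (rectangular open set condition) and whose $x$-projection is a full interval; the Feng--Wang formula then gives the dimension of the sub-attractor exactly, and an easy monotonicity argument shows it exceeds $1$. This sidesteps the overlap problem entirely by enforcing separation at a fixed finite level, at the price of being parameter-dependent and, as the paper reports, failing for some pairs such as $(\lambda,\mu)=(0.45,0.6)$. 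Your fibre/box-counting route is in principle uniform in $(\lambda,\mu)$ and would settle the full conjecture if the overlap obstruction could be handled, but that obstruction is of Bernoulli-convolution type and is the genuine reason the statement remains open.
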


Lastly, using a technique from \cite{HS-2D} we have
\begin{theorem} \label{thm3}
For all $\lambda \mu \geq 2^{-1/6}$ we have $A_{\lambda, \mu}$ has non-empty interior.
\end{theorem}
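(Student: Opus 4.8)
The approach I would take, following the method of \cite{HS-2D}, is to produce a nonempty open set that is covered by its own images under a finite iterate of the IFS. First I would record the elementary criterion: if there are an integer $N\ge 1$ and a nonempty bounded open set $U$ with
\[
U\subseteq\bigcup_{|w|=N}T_w(U),
\]
then $\overline U\subseteq A_{\lambda,\mu}$, so $A_{\lambda,\mu}$ has nonempty interior. (Iterating the inclusion gives $\overline U\subseteq\bigcup_{|w|=kN}T_w(\overline U)$ for all $k$, and the right-hand side tends to $A_{\lambda,\mu}$ in the Hausdorff metric as $k\to\infty$, while $A_{\lambda,\mu}$ is closed.) I would also note from the start that $A_{\lambda,\mu}$ is invariant under the reflection $\sigma(x,y)=(1-y,1-x)$, since $\sigma T_0\sigma=T_1$, and more generally $\sigma T_w\sigma=T_{\overline w}$, where $\overline w$ interchanges $0$'s and $1$'s in $w$; this symmetry is a convenient bookkeeping device when setting up the covering.

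The point of the hypothesis is the choice $N=12$. A word $w$ of length $12$ with exactly six $0$'s and six $1$'s has linear part $\mathrm{diag}(\lambda^6\mu^6,\mu^6\lambda^6)=(\lambda\mu)^6 I$, so such a $T_w$ is a homothety of ratio $(\lambda\mu)^6$, and $\lambda\mu\ge 2^{-1/6}$ is precisely the condition that makes this ratio at least $\tfrac12$. Recall that the four--corner homothety IFS on a square --- the four maps of ratio $r$ fixing its four corners --- has the whole square as attractor as soon as $r\ge\tfrac12$. So the plan is to choose a small axis--parallel square $U\subset(0,1)^2$ together with four ``balanced'' length--$12$ words whose fixed points sit at (or close to) the four corners of $U$; the four squares $T_w(U)$, each of side at least $\tfrac12|U|$ and anchored near a distinct corner, then cover $U$. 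Using $\sigma$ one can cut down the search: if $U$ is centred on the anti-diagonal $\{x+y=1\}$ and $w$ targets one corner, then $\overline w$ automatically targets the $\sigma$-image of that corner.

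Concretely, the translation part of $T_w$ is
\[
\sum_{k:\,w_k=1}\bigl(\lambda^{a_k}\mu^{b_k}(1-\mu),\ \mu^{a_k}\lambda^{b_k}(1-\lambda)\bigr),
\]
where $a_k$ and $b_k$ count the $0$'s and $1$'s of $w$ before position $k$, and the fixed point of a balanced $T_w$ is this vector divided by $1-(\lambda\mu)^6$. One then has to exhibit a recipe for $U=[a,a+v]\times[1-a-v,1-a]$ and for the chosen words so that these fixed points land within the available slack of the corners of $U$ for every admissible $(\lambda,\mu)$. There is ample room: there are $\binom{12}{6}=924$ balanced words to choose from, and the total area $\sum_{|w|=12}|T_w(U)|=(2\lambda\mu)^{12}|U|\ge 2^{10}|U|$ is huge, so the covering is very plausible, and if necessary one can include further level--$12$ rectangles to reinforce it.

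The step I expect to be the main obstacle is precisely this verification: pinning down a single choice of $U$ and of the relevant words (possibly varying with the region of parameter space) that is guaranteed to give a genuine covering uniformly over all $(\lambda,\mu)$ with $\lambda\mu\ge 2^{-1/6}$ --- and in particular handling the boundary curve $\lambda\mu=2^{-1/6}$, where $(\lambda\mu)^6=\tfrac12$ and the four--corner covering is tight with no slack. Dealing with that tight case will likely require either arranging the four fixed points to hit the corners of $U$ exactly along that curve, or, more robustly, using strictly more than four level--$12$ rectangles so that the surplus coverage compensates for the lost slack. The remaining ingredients --- the reduction to the self-covering criterion, the homothety structure of balanced words, the role of the bound $\lambda\mu\ge 2^{-1/6}$, and the symmetry reduction --- are routine.
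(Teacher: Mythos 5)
Your preliminary reductions are all correct: the self-covering criterion, the symmetry $\sigma T_w\sigma=T_{\overline w}$, and the observation that a balanced word of length $12$ yields a homothety of ratio $(\lambda\mu)^6\ge\tfrac12$ exactly when $\lambda\mu\ge2^{-1/6}$. But the proof has a genuine gap at precisely the step you yourself flag as ``the main obstacle'': the covering $U\subseteq\bigcup_i T_{w_i}(U)$ is never exhibited, only asserted to be ``very plausible''. The area count $\sum_{|w|=12}|T_w(U)|\ge 2^{10}|U|$ is not evidence for a covering, since almost all of the $\binom{12}{6}$ balanced images $T_w(U)$ lie nowhere near $U$. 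Worse, on the critical curve $\lambda\mu=2^{-1/6}$ the four-corner configuration has zero slack, so your scheme would require four fixed points of balanced words to be \emph{exactly} the vertices of an axis-parallel square, simultaneously along a one-parameter family of $(\lambda,\mu)$ --- an algebraic coincidence one cannot expect --- and the fallback of adding further rectangles is not worked out. Since this covering is the entire content of the theorem, the argument as it stands does not prove the statement.

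For comparison, the paper sidesteps the covering problem with a two-line reduction: since $M_0=\mathrm{diag}(\lambda,\mu)$ and $M_1=\mathrm{diag}(\mu,\lambda)$ commute and $M_0M_1=\lambda\mu I$ is scalar, it passes to the sub-IFS $\{T_0T_1T_0,\ T_0^2T_1\}$, whose two maps share the common \emph{non-scalar} linear part $M_0^2M_1=\mathrm{diag}(\lambda^2\mu,\lambda\mu^2)$ of determinant $(\lambda\mu)^3$, and then invokes the main theorem of \cite{HS-2D} on two-map IFS $\{Mx,Mx+u\}$ with a common matrix, whose hypothesis $|\det M|\ge 2^{-1/2}$ becomes exactly $\lambda\mu\ge2^{-1/6}$. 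To complete your route you would in effect have to reprove that theorem (which is where the hard covering construction actually lives) rather than cite it; applying it to a two-map commuting sub-IFS is the intended, and much shorter, argument.
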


\begin{figure}
\centering \scalebox{0.4} {\includegraphics{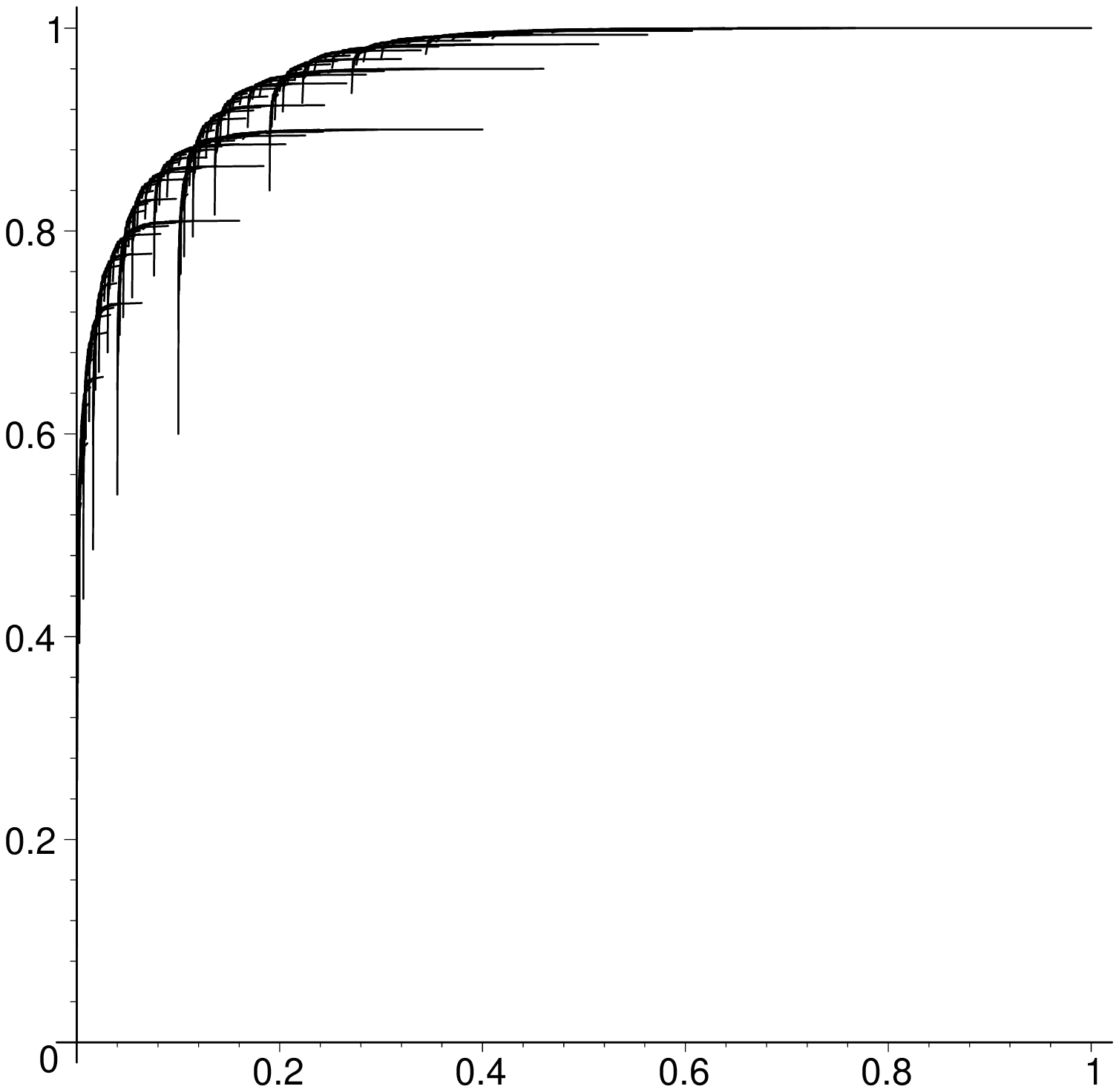}}
\centering \scalebox{0.4} {\includegraphics{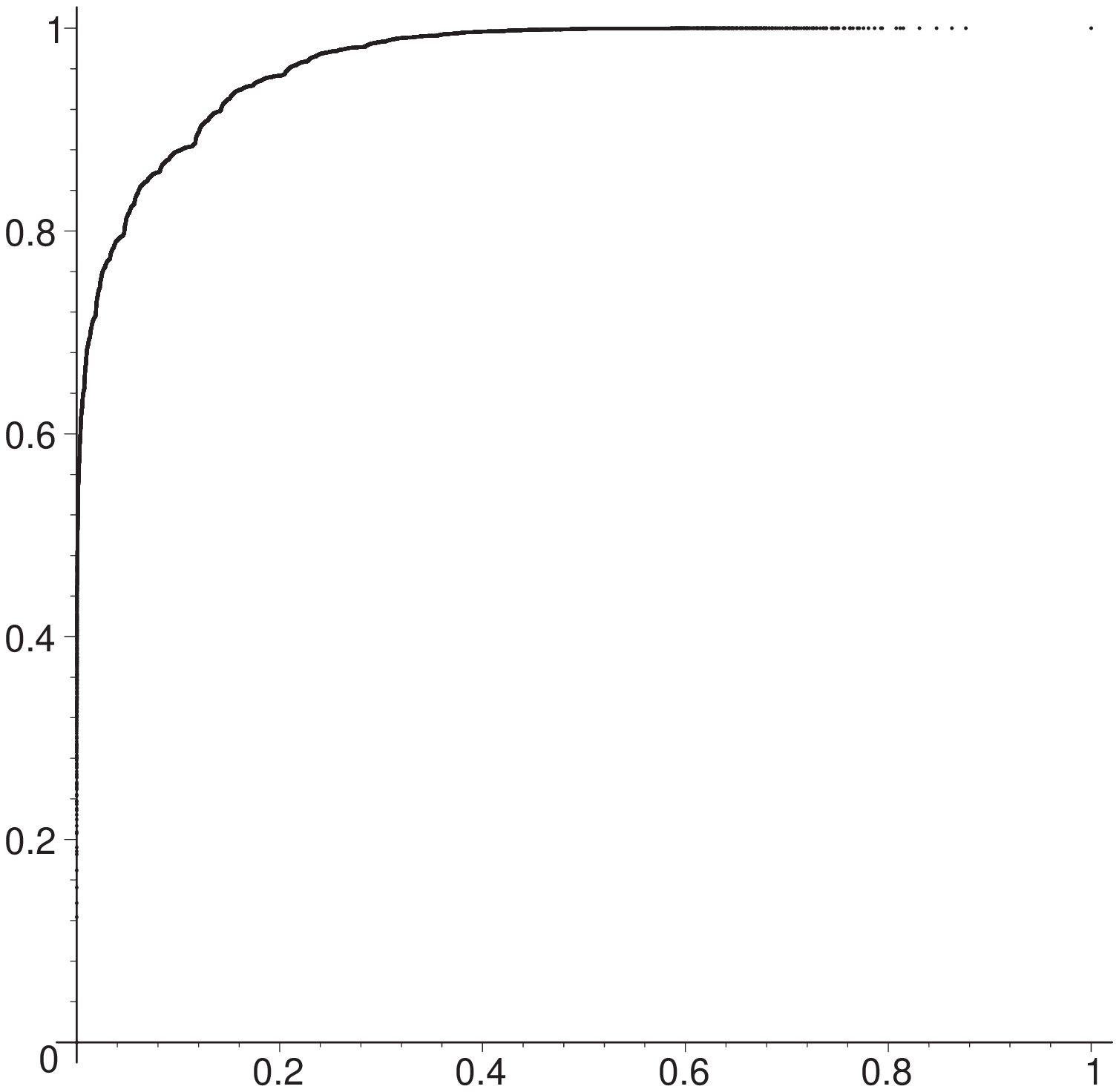}}
\caption{The attractor $A_{0.4,0.9}$ and $\partial_{top}(A_{0.4, 0.9})$.}\label{fig:Alamu}
\end{figure}

In Section \ref{sec:thm1} we give a prove of Theorem \ref{thm1}.
We also introduce a subset $B_{\lambda, \mu} \subset A_{\lambda, \mu}$ upon which the definition of $G$ is based.
A computational investigation of $G$ is given in Section~\ref{sec:comp}.
Sections~\ref{sec:thm2} and \ref{sec:thm3} prove Theorems~\ref{thm2} and \ref{thm3} respectively.

\section{Proof of Theorem \ref{thm1}}
\label{sec:thm1}

We will prove this result in two steps.
The first is to show that $A_{\lambda, \mu}$ contains a strictly increasing
    continuous function going from $(0,0)$ to $(1,1)$ with some additional
    properties.
This will be the set $B_{\lambda, \mu}$ and is described in Lemma \ref{lem:B}.

After this we will introduce a map $\R$ which has
    $\partial_{top}(A_{\lambda,\mu})$ as an attractor, and
    further whose iterates on $B_{\lambda, \mu}$ 
    are continuous increasing functions with the same additional properties as $B_{\lambda, \mu}$.
This is done in Lemma \ref{lem:C}.

This second step requires an additional property on $B_{\lambda, \mu}$
    which conjecturally is true for all $0 < \lambda < \mu < 1$, $\lambda+\mu > 1$,
    and computationally is true for at least \Gtrue\ of such pairs $(\lambda, \mu)$.
The set where this additional property is true is called $G$.
See Definition \ref{defn:G} for a precise definition.

Put
\begin{align*}
  S_0(x,y) & =\begin{cases}
                T_0(x,y), & \mbox{if } \lambda x+\mu y\le1 \\
                (0,0), & \mbox{otherwise}.
              \end{cases}
   \\
   S_1(x,y) & =\begin{cases}
                T_1(x,y), & \mbox{if } \mu x+\lambda y\ge\lambda+\mu-1 \\
                (1,1), & \mbox{otherwise}.
              \end{cases}
\end{align*}
The attractor of $\{S_0, S_1\}$ is not unique.  For example,
    the pair $\{(0,0), (1,1)\}$ is fixed under this map.
It is clear that if we have two different attractors of $\{S_0, S_1\}$, then
    their union is also an attractor.
Further, all attractors are contained in $[0,1] \times [0,1]$.
As such there is a maximal attractor, which we define as
    $B_{\lambda,\mu}$.
Clearly, $B_{\lambda,\mu}\subset A_{\lambda,\mu}$.
\begin{lemma}\label{lem:B}
The attractor $B_{\lambda,\mu}$ is the graph of a continuous function, i.e., for any $x\in[0,1]$ there exists a unique $y\in[0,1]$ such that $(x,y)\in B_{\lambda,\mu}$. This function is strictly increasing.
\end{lemma}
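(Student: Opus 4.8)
The plan is to realise $B_{\lambda,\mu}$ as the graph of a function by combining the self-referential identity $B_{\lambda,\mu}=S_0(B_{\lambda,\mu})\cup S_1(B_{\lambda,\mu})$ with a handful of elementary facts, all consequences of $0<\lambda<\mu<1$ and $\lambda+\mu>1$: (i) $(0,0),(1,1)\in B_{\lambda,\mu}$, since these are fixed points of $T_0,T_1$ and lie in the maximal attractor; (ii) the inequality $\lambda x+\mu y\le1$ defining $S_0$ says precisely that $T_0(x,y)$ lies in the half-plane $\{X+Y\le1\}$, and $\mu x+\lambda y\ge\lambda+\mu-1$ says precisely that $T_1(x,y)\in\{X+Y\ge1\}$, so $S_0(B_{\lambda,\mu})$ lies weakly below, and $S_1(B_{\lambda,\mu})$ weakly above, the antidiagonal $\ell=\{X+Y=1\}$; (iii) $T_0([0,1]^2)$ and $T_1([0,1]^2)$ project in the first coordinate onto $[0,\lambda]$ and $[1-\mu,1]$, which overlap in $[1-\mu,\lambda]$ and cover $[0,1]$; and (iv) the whole system is invariant under the involution $\sigma(x,y)=(1-y,1-x)$, because $\sigma T_0\sigma=T_1$ and $\sigma(\ell)=\ell$, whence $\sigma(B_{\lambda,\mu})=B_{\lambda,\mu}$. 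In particular, by (ii), $\ell$ splits $B_{\lambda,\mu}$ into a lower part produced by $S_0$ and an upper part produced by $S_1$.

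First I would translate, for a set that happens to be the graph of a strictly increasing $g$ with $g(0)=0$, $g(1)=1$, the identity $E=S_0(E)\cup S_1(E)$ into an operator $\Phi$ on such functions. By (ii)--(iii) the two image pieces never overlap badly: the $S_0$-image is the graph of $X\mapsto\mu g(X/\lambda)$ to the left of the point at which the graph of $g$ crosses the line $\lambda x+\mu y=1$, pushed forward by $T_0$ onto a point of $\ell$, the $S_1$-image is the graph of $X\mapsto\lambda g((X-1+\mu)/\mu)+1-\lambda$ to the right of that point, and the two meet at exactly one point of $\ell$ precisely because $g$ is $\sigma$-symmetric; so $\Phi g$ is again a strictly increasing $\sigma$-symmetric function fixing $0$ and $1$. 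The key observations are that $\Phi$ is monotone for the pointwise order (raising $g$ only moves its crossing point left) and that $\Phi(\mathrm{id})\ge\mathrm{id}$ by a direct computation. Hence $g_n:=\Phi^n(\mathrm{id})$ is increasing, bounded by $1$, and converges pointwise to a non-decreasing $g_\infty$ fixing $0$ and $1$, which one identifies with a fixed point of $\Phi$. Its graph is then an attractor of $\{S_0,S_1\}$, hence contained in $B_{\lambda,\mu}$; in particular every $x\in[0,1]$ is the abscissa of a point of $B_{\lambda,\mu}$.

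It remains to show $B_{\lambda,\mu}$ equals this graph, i.e. that over each $x\in[0,1]$ there is a single point of $B_{\lambda,\mu}$; then $B_{\lambda,\mu}$ is the graph of $g_\infty$, and since $B_{\lambda,\mu}$ is compact a one-point-fibre graph of an increasing function can have no jump, so $g_\infty$ is automatically continuous. For the fibre statement I would argue by pull-back: a \emph{bad pair} --- two points of $B_{\lambda,\mu}$ with the same abscissa, or more generally an order-reversing pair $(x_1,y_1),(x_2,y_2)$ with $x_1<x_2$ but $y_1>y_2$ --- can be written as $S_0$ or $S_1$ of a pair of points of $B_{\lambda,\mu}$; since $S_0,S_1$ act there as the injective affine maps $T_0,T_1$, in the pure cases (both via $S_0$, or both via $S_1$) one gets a bad pair whose vertical gap has been multiplied by $1/\mu$ or $1/\lambda$. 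As the gap is at most $1$ this cannot persist, so one is eventually forced into the mixed case, one point from $S_0$ and the other from $S_1$. The mixed case is the main obstacle: both abscissae then lie in the overlap strip $[1-\mu,\lambda]$ and the pull-back may shrink them or reverse their order, so the vertical gap is no longer a monovariant; I expect to control it using the splitting (ii) --- inside the strip the $S_0$-point satisfies $x+y\le1$ and the $S_1$-point satisfies $x+y\ge1$, bounding the available vertical gap, while $T_0^{-1}$ stretches the first coordinate by $1/\lambda$ and $T_1^{-1}$ shrinks it --- to show the mixed case occurs only finitely often, leaving a pure tail and the desired contradiction.

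Finally, strict monotonicity of $g_\infty$ follows from the same device applied to flat pieces: a horizontal segment in $B_{\lambda,\mu}$ pulls back, via $S_0$ or $S_1$ (with a subcase where the segment straddles $\ell$ and splits in two), to a strictly longer horizontal segment in $B_{\lambda,\mu}$, which is impossible since everything lies in $[0,1]^2$. Putting these together proves the lemma, the hardest ingredient being the mixed-case analysis in the pull-back argument for one-point fibres.
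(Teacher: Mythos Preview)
Your approach is genuinely different from the paper's, and the difference is exactly where your difficulty lies. The paper iterates \emph{from above}: starting from $X=[0,1]^2$, it sets $Y_n=\bigcup_{i_1\dots i_n}S_{i_1}\cdots S_{i_n}(X)$, observes that $S_0(X)\cap S_1(X)$ is a segment on $x+y=1$, and argues inductively that $Y_n$ is a union of $2^n$ polygons with pairwise disjoint interiors, totally ordered so that any vertical (or horizontal) line meets $Y_n$ in an interval touching only boundedly many of them. Since $Y_n\downarrow B_{\lambda,\mu}$, the vertical sections shrink to points, and the graph property together with strict monotonicity fall out simultaneously. There is no separate uniqueness step, because the nested polygons already force each fibre to be a singleton.

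You instead iterate \emph{from below}, building $g_\infty$ as a limit of $\Phi^n(\mathrm{id})$. Your existence half is fine: the $\sigma$-symmetry neatly guarantees the two pieces of $\Phi g$ match on $\ell$, and the monotonicity of $\Phi$ together with $\Phi(\mathrm{id})\ge\mathrm{id}$ gives a limit whose graph sits inside $B_{\lambda,\mu}$. But your uniqueness half --- showing this graph is \emph{all} of $B_{\lambda,\mu}$ --- has a real gap, and you have correctly identified it. In the mixed pull-back case the two preimages land at different abscissae ($T_0^{-1}$ dilates the first coordinate by $1/\lambda$, $T_1^{-1}$ by $1/\mu$ after a shift), so neither the vertical gap nor any obvious combination of coordinates is a monovariant; the constraint from (ii) bounds the vertical gap by the width of the overlap strip but does not by itself force termination, and ``I expect to control it'' is not yet an argument. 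I do not see a quick fix along these lines without essentially reintroducing the paper's ordered-polygon picture, which is what makes the mixed case transparent: two points of $B_{\lambda,\mu}$ lie in two polygons of $Y_n$, and the ordering of those polygons is exactly the statement that no bad pair exists. If you want to salvage your route, the cleanest patch is probably to prove directly that any attractor of $\{S_0,S_1\}$ contained in $[0,1]^2$ is already contained in the graph of $g_\infty$, by showing $\Phi$ is a contraction on the space of $\sigma$-symmetric increasing graphs in a suitable metric; otherwise, switching to the descending-polygon argument avoids the issue entirely.
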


\begin{proof}
Let $X = [0,1] \times [0,1]$.
Notice that $S_0(X)\cap S_1(X)$ is a segment on $x+y=1$. Put
\[
Y_n = \bigcup_{i_1\dots i_n\in\{0,1\}^n}S_{i_1}\dots S_{i_n}(X).
\]
Then $Y_n$ is a union of $2^n$ polygons such that their interiors are disjoint -- see Figure~\ref{fig:S}.
It is worth noting that these polygons may be the points $(0,0)$ or $(1,1)$.
One can show by induction that each non-trivial polygon is either a pentagon or a hexagon. These polygons are ordered: for any two of them, one's upper right corner  is higher than the other's (see Figure~\ref{fig:S}). Also, $S_{i_1}\dots S_{i_n}(X)$ is higher than $S_{j_1}\dots S_{j_n}(X)$
iff $i_1\dots i_n\succ j_1\dots j_n$.

\begin{figure}
\centering \scalebox{0.4} {\includegraphics{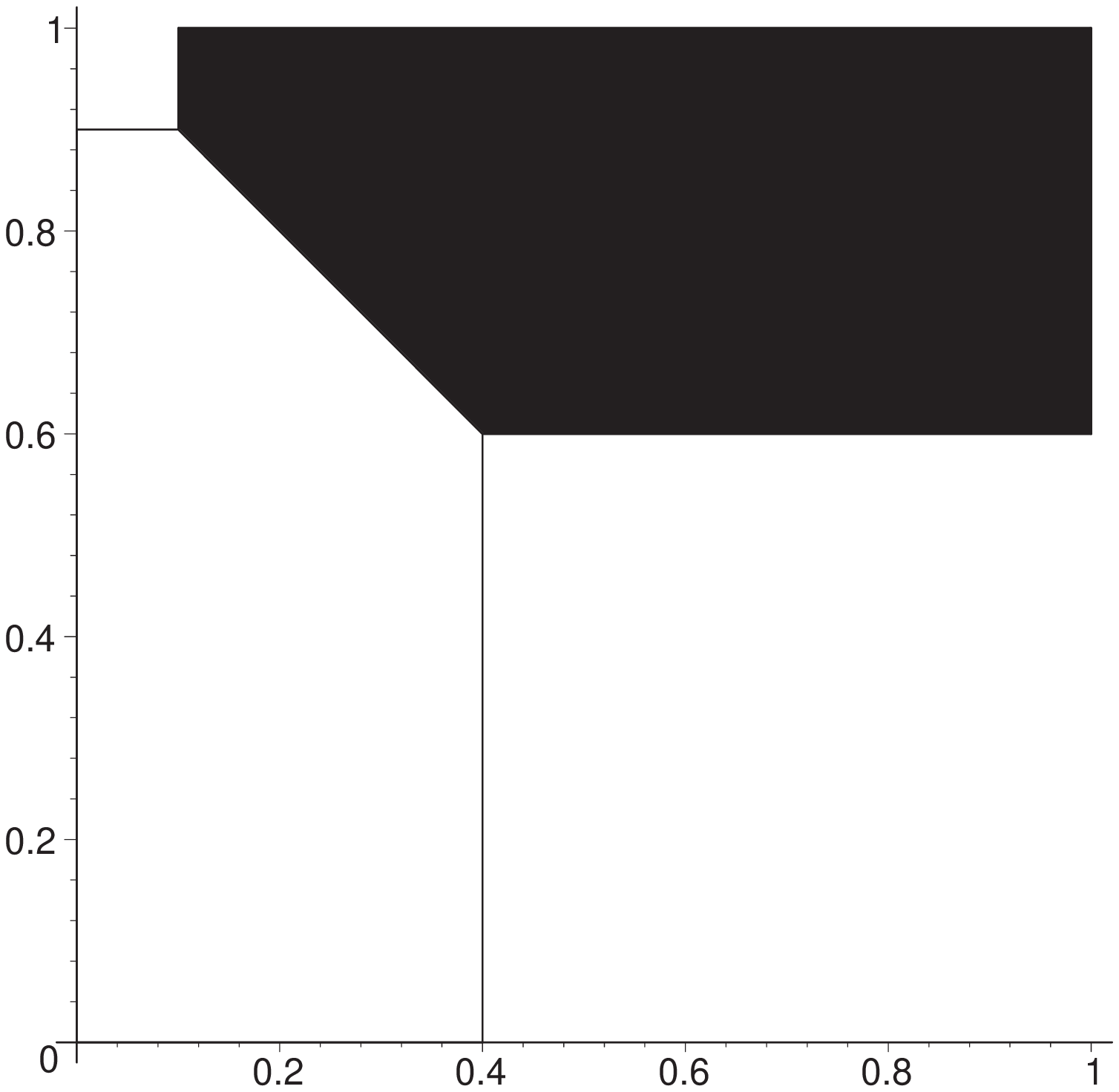}}
\centering \scalebox{0.4} {\includegraphics{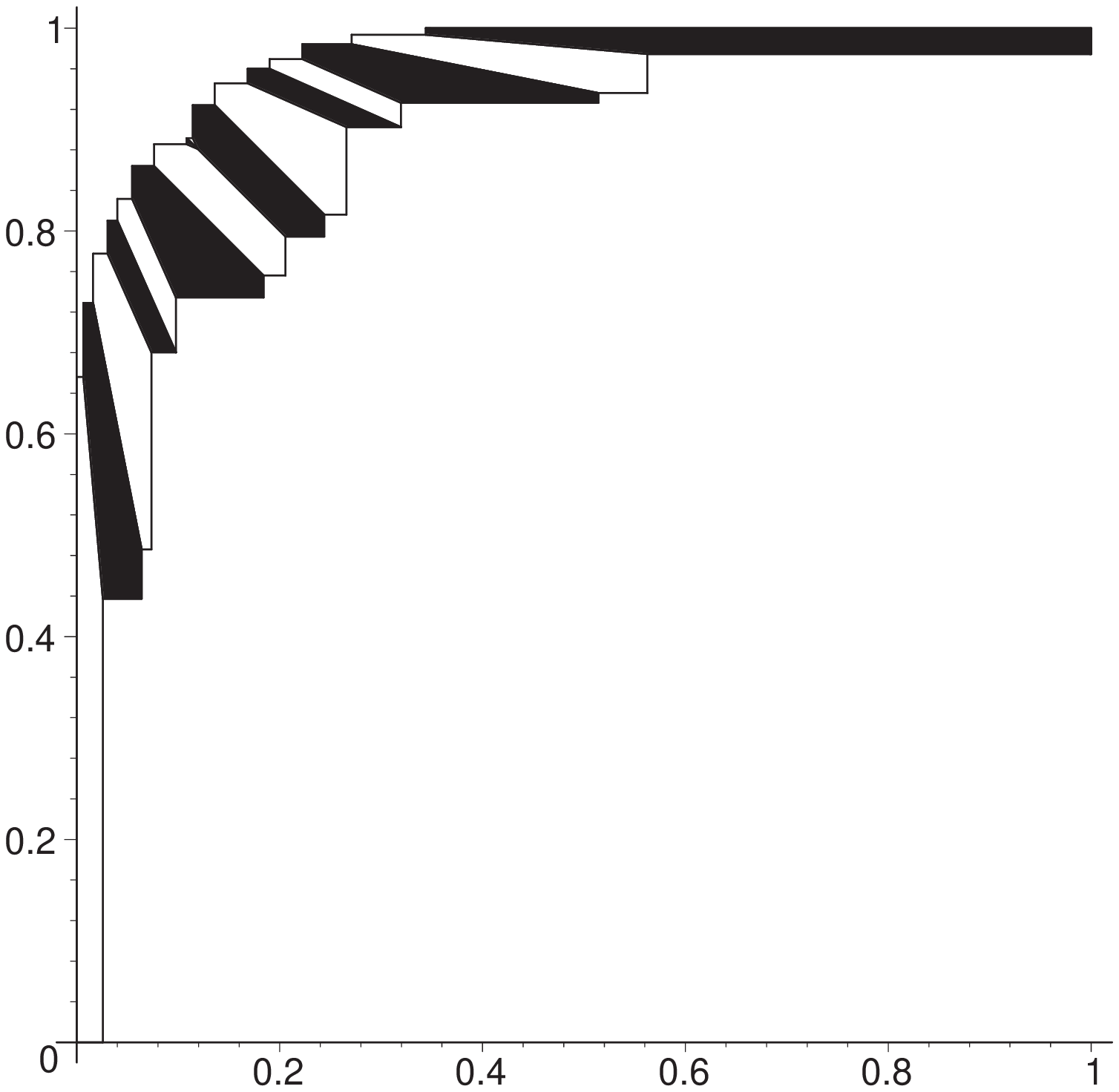}}
\caption{The sets $Y_1$ and $Y_4$ for $\lambda = 0.4, \mu = 0.9$.}\label{fig:S}
\end{figure}

Finally, any intersection of $Y_n$ with any horizontal or vertical line is an interval; this intersection involves only a bounded number of polygons. This follows from the fact that when we go from $Y_n$ to $Y_{n+1}$, we cut out a certain proportion of each polygon both horizontally and vertically -- so we will have that any polygon will be strictly higher and to the right or strictly lower and to the left from any other polygon except a number of them which depends on $\lambda$ and $\mu$ only.

It is easy to see that $Y_n \to B_{\lambda, \mu}$ in the Hausdorff metric.
\end{proof}

A key property of $B_{\lambda, \mu}$ that we will exploit is
        that $T_0(1,1)$ and $T_1(0,0)$ are below $B_{\lambda, \mu}$.
Unfortunately, although this appears to be computationally true for all $0 < \lambda < \mu <1$ with $\lambda + \mu > 1$,
    a general proof is not known.
\begin{definition}
\label{defn:G}
Define
\[ G := \{(\lambda, \mu) : T_0(1,1) \text{ and } T_1(0,0) \text{ are below } B_{\lambda, \mu}\} \]
\end{definition}

In Section \ref{sec:comp} we discuss how one can find regions in $G$, and
    provide a link to data demonstrating that $G$ is at least \Gtrue\ of the $0 < \lambda < \mu <1$ with $\lambda + \mu > 1$.

We now introduce one last function, going from the set of non-empty
    compact sets to non-empty compact sets by
    \[
     \R(A) = \partial_{top}(T_0(A) \cup T_1(A)).
     \]
We observe that $\partial_{top}(A_{\lambda, \mu})$ is fixed by this map.
It is not true in general if $A$ is a continuous function that $\R(A)$ will also
    be a continuous function.

\begin{lemma}
\label{lem:C}
Assume $(\lambda, \mu) \in G$.
Define $R_n = \R^{[n]}(B_{\lambda,\mu})$.
We have
\begin{enumerate}
\item $R_n$ is a continuous increasing function
\label{case:1}
\item $R_{n-1} \leq R_n$ in the sense that
      for $(x,y) \in R_{n-1}$ there exists a $y' \geq y$
      such that $(x,y') \in R_{n}$.
\label{case:2}
\item $R_n \leq \partial_{top}(A_{\lambda, \mu})$.
\label{case:3}
\item $R_n \to \partial_{top}(A_{\lambda,\mu})$ as $n\to\infty$.
\label{case:4}
\item $\partial_{top}(A_{\lambda,\mu})$ has no jump discontinuities and 
    is strictly increasing.
\label{case:5}
\end{enumerate}
\end{lemma}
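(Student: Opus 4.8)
The plan is to run an interlocking induction on $n$ for \eqref{case:1}--\eqref{case:3} and then extract \eqref{case:4} and \eqref{case:5} from a contraction argument. Two structural facts about $\R$ drive everything. First, $\R(C)=\R(\partial_{top}(C))$ for every non-empty compact $C$: indeed $\partial_{top}(T_i(C))=T_i(\partial_{top}(C))$ because each $T_i$ is affine and increasing in the second coordinate, and $\partial_{top}$ of a union is the pointwise maximum of the individual $\partial_{top}$'s. Iterating this gives $R_n=\partial_{top}((T_0\cup T_1)^{n}(B_{\lambda,\mu}))$, writing $(T_0\cup T_1)(Z):=T_0(Z)\cup T_1(Z)$, and applying it to $C=A_{\lambda,\mu}$ shows that $\R$ fixes $\partial_{top}(A_{\lambda,\mu})$. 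Second, $\R$ is monotone: if one graph over $[0,1]$ lies pointwise below another, so do their $\R$-images. With these, \eqref{case:2} follows by induction from the base case $B_{\lambda,\mu}\le\R(B_{\lambda,\mu})$, which holds because $B_{\lambda,\mu}=S_0(B_{\lambda,\mu})\cup S_1(B_{\lambda,\mu})\subseteq T_0(B_{\lambda,\mu})\cup T_1(B_{\lambda,\mu})$ (the clipped values $(0,0)$ and $(1,1)$ already lie on the right-hand side), so $B_{\lambda,\mu}$ lies below $\partial_{top}$ of that union; in particular $R_n\ge B_{\lambda,\mu}$ for all $n$.

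For \eqref{case:1} I would induct starting from Lemma~\ref{lem:B}. If $R_{n-1}$ is continuous and strictly increasing with $R_{n-1}(0)=0$, $R_{n-1}(1)=1$ and $R_{n-1}\ge B_{\lambda,\mu}$, then $\R(R_{n-1})$ equals $T_0(R_{n-1})$ on $[0,1-\mu]$, the pointwise maximum of $T_0(R_{n-1})$ and $T_1(R_{n-1})$ on the overlap $[1-\mu,\lambda]$, and $T_1(R_{n-1})$ on $[\lambda,1]$; each piece is continuous and strictly increasing, and the pieces glue continuously at $x=1-\mu$ and $x=\lambda$ exactly when $\mu R_{n-1}((1-\mu)/\lambda)\ge 1-\lambda$ and $\lambda R_{n-1}((\lambda+\mu-1)/\mu)+1-\lambda\ge\mu$. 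These two inequalities say that $T_1(0,0)$ lies below the graph $T_0(R_{n-1})$ and that $T_0(1,1)$ lies below the graph $T_1(R_{n-1})$. Using the self-affine identities $B_{\lambda,\mu}(1-\mu)=\mu B_{\lambda,\mu}((1-\mu)/\lambda)$ and $B_{\lambda,\mu}(\lambda)=\lambda B_{\lambda,\mu}((\lambda+\mu-1)/\mu)+1-\lambda$ (read off from the $S_0,S_1$ description of $B_{\lambda,\mu}$ in the proof of Lemma~\ref{lem:B}), the hypothesis $(\lambda,\mu)\in G$ gives these inequalities for $R_0=B_{\lambda,\mu}$, and then $R_{n-1}\ge B_{\lambda,\mu}$ propagates them to every $n$.

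Item \eqref{case:3} is immediate: $\partial_{top}(A_{\lambda,\mu})$ is fixed by $\R$, $R_0=B_{\lambda,\mu}\subseteq A_{\lambda,\mu}$ gives $R_0\le\partial_{top}(A_{\lambda,\mu})$, and monotonicity of $\R$ carries this up the tower. For \eqref{case:4} and the continuity assertion in \eqref{case:5}, let $\mathcal C$ be the set of continuous increasing graphs from $(0,0)$ to $(1,1)$ lying above $B_{\lambda,\mu}$; it is a closed, hence complete, subset of $(C[0,1],\|\cdot\|_\infty)$, the analysis of \eqref{case:1} shows $\R(\mathcal C)\subseteq\mathcal C$, and the piecewise formula gives $\|\R f-\R g\|_\infty\le\mu\|f-g\|_\infty$, so $\R$ is a contraction on $\mathcal C$. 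Hence the $R_n$ converge uniformly to the unique fixed point $R_\infty\in\mathcal C$, which is continuous. To identify $R_\infty$ with $\partial_{top}(A_{\lambda,\mu})$, combine $R_n\le\partial_{top}(A_{\lambda,\mu})$ from \eqref{case:3} with the reverse bound: any $(x^*,y^*)\in\partial_{top}(A_{\lambda,\mu})\subseteq A_{\lambda,\mu}$ is a limit of points $(x_n,y_n)\in(T_0\cup T_1)^{n}(B_{\lambda,\mu})$ by the usual Hausdorff convergence of IFS iterates, and $R_n=\partial_{top}((T_0\cup T_1)^{n}(B_{\lambda,\mu}))$ forces $R_n(x_n)\ge y_n$; uniform convergence together with continuity of $R_\infty$ gives $R_\infty(x^*)\ge y^*$, so $R_\infty=\partial_{top}(A_{\lambda,\mu})$ and $R_n\to\partial_{top}(A_{\lambda,\mu})$ uniformly, hence in the Hausdorff metric.

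Finally, strict monotonicity of $\partial_{top}(A_{\lambda,\mu})=R_\infty$ follows from a descent on the equation $F=\R(F)$: if $F$ were constant on some interval, pick a maximal closed interval $[a,b]$ on which $F$ is constant (attained since $F$ is continuous), and feed $[a,b]$ into the piecewise formula for $\R(F)$ in whichever of the three zones it meets; this produces a strictly longer interval (scaled by $1/\lambda$ or $1/\mu$) on which $F$ is again constant, a contradiction. I expect the seam-matching step inside \eqref{case:1} to be the main obstacle: it is the only place where $(\lambda,\mu)\in G$ is used, and it works only because the self-affine identities convert the geometric condition that $T_0(1,1)$ and $T_1(0,0)$ lie below $B_{\lambda,\mu}$ into precisely the analytic inequalities needed to glue $T_0(R_{n-1})$ to $T_1(R_{n-1})$ along the overlap --- mere monotonicity of $B_{\lambda,\mu}$ would fall short by the factors $1/\mu$ and $1/\lambda$. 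The descent in \eqref{case:5} is a lesser nuisance, mostly the bookkeeping when a flat interval straddles the three zones.
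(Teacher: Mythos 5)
Your treatment of \eqref{case:1}--\eqref{case:4} follows the same strategy as the paper (interlocking induction, with $G$ entering exactly at the seam points $x=1-\mu$ and $x=\lambda$), and in places you are more careful than the source: the paper disposes of \eqref{case:4} by asserting that $\partial_{top}$ commutes with the Hausdorff limit of $\bigcup_a T_a(B_{\lambda,\mu})$, whereas your contraction estimate $\|\R f-\R g\|_\infty\le\mu\|f-g\|_\infty$ on the class $\mathcal C$ gives uniform convergence and continuity of the limit in one stroke, and your self-affine identities for $B_{\lambda,\mu}$ at $x=1-\mu$ and $x=\lambda$ make precise the step the paper compresses into ``$T_0(1,1)$ is below $R_{n-1}$, hence below $T_1(R_{n-1})$.''

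The genuine problem is the descent argument for strict monotonicity in \eqref{case:5}, and it is not the ``lesser nuisance'' you call it. Suppose $F=\R(F)$ is constant on a maximal-length interval $[a,b]$ of length $L$. If $[a,b]$ lies in a single zone, you do get a flat interval of length at least $L/\mu>L$. But in the straddling case the horizontal segment $[a,b]\times\{c\}$ is covered by $\lambda I_0$ and $\mu I_1+1-\mu$, where $I_0=F^{-1}(c/\mu)$ and $I_1=F^{-1}((c+\lambda-1)/\lambda)$ are themselves flat intervals; all you can extract is $\lambda|I_0|+\mu|I_1|\ge L$, hence $|I_0|\ge (1-\mu)L/\lambda$ and $|I_1|\ge(1-\lambda)L/\mu$, and both lower bounds exceed $L$ only when $\lambda+\mu<1$ --- i.e.\ the naive rescaling fails precisely in the parameter regime of the lemma. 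The case analysis can be salvaged, but only by importing extra information: e.g.\ when $a<1-\mu<\lambda<b$ the two functional equations force $F\equiv c/\mu$ on $[a/\lambda,1]$ and $F\equiv(c+\lambda-1)/\lambda$ on $[0,(b-1+\mu)/\mu]$, and the boundary values $F(0)=0$, $F(1)=1$ then force $c=\mu$ and $c=1-\lambda$ simultaneously, contradicting $\lambda+\mu>1$; a similar boundary-value argument is needed in one of the partial-straddle subcases. The paper avoids all of this with a different idea: the reflection $\sigma(x,y)=(1-y,1-x)$ conjugates $T_0$ and $T_1$, so $\sigma$ maps $\partial_{top}(A_{\lambda,\mu})$ to the left boundary of $A_{\lambda,\mu}$, and a flat spot of $\partial_{top}$ over $[a,b]$ at height $c$ becomes a jump of $\partial_{top}$ at $x=1-c$ (no point of $A_{\lambda,\mu}$ at heights in $[1-b,1-a]$ can lie left of $1-c$), contradicting the already-established continuity. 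You should either adopt that symmetry argument or carry out the full straddle case analysis; as written, the step ``this produces a strictly longer interval scaled by $1/\lambda$ or $1/\mu$'' is false whenever the flat interval meets more than one zone.
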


\begin{proof}
To see \eqref{case:2} and \eqref{case:3},
    observe that
    $R_n \subset \bigcup_{a \in \{0,1\}^n} T_a(B_{\lambda, \mu})
    \subset A_{\lambda,, \mu}$ and
    $\partial_{top}(R_{n-1}) = R_{n-1}$.

We prove \eqref{case:1} by induction.
We observe that $R_0 = B_{\lambda, \mu}$ is a continuous increasing curve
    with the property that $T_{0}(1,1)$ and $T_{1}(0,0)$ are below the
    curve $R_0$.
We see that $R_n \subset T_0 (R_{n-1}) \cup T_1(R_{n-1})$.
Hence $T_0(R_{n-1})$ is a continuous increasing curve from $(0,0)$ to
    $T_0(1,1) = (\lambda, \mu)$.
Further, as $T_0(1,1)$ is below $R_{n-1}$ which in turn is below $R_n$ we
    have that $T_0(1,1)$ is below $T_1(R_{n-1})$.
This implies that the curve $R_n$ is continuous and increasing at
    $x = \lambda$, as $T_1(R_{n-1})$ is continuous and
    increasing at $x = \lambda$.

As similar observation can be made for $T_1(0,0)$.
Hence $R_{n}$ is increasing and continuous.

We have that \eqref{case:4} follows from the observation that
    $R_n = \partial_{top}\left(\bigcup_{a \in \{0,1\}^n} T_a(B_{\lambda, \mu})\right)$ and
    $\lim_{n\to\infty}\left(\bigcup_{a \in \{0,1\}^n} T_a(B_{\lambda, \mu})\right) = A_{\lambda,\mu}$ in the Hausdorff topology.

Lastly, to see \eqref{case:5},
    let $M$ be the supremum of the jump discontinuities of 
    $\partial_{top}(A_{\lambda,\mu})$.
We note that $\lambda M$ is the supremum of the jump discontinuities of
    $\R(\partial_{top}(A_{\lambda,\mu})) = 
        \partial_{top}(A_{\lambda,\mu})$.
Hence $M = 0$ and $\partial_{top}(A_{\lambda,\mu})$ has no jump discontinuities.
As $\partial_{top}(A_{\lambda,\mu})$ is symmetric about $\lambda+\mu=1$ 
    we see that it is strictly increasing.
\end{proof}

\begin{remark}
It is tempting to believe that $\R(B_{\lambda,\mu}) = B_{\lambda,\mu}$.
This is unfortunately not always the case.
In Figure \ref{fig:neq} we show the image of $T_0(B_{0.4, 0.9})$ and $T_1(B_{0.4, 0.9})$, magnified near the region of
    intersection.
\end{remark}

\begin{figure}
\centering \scalebox{0.4} {\includegraphics{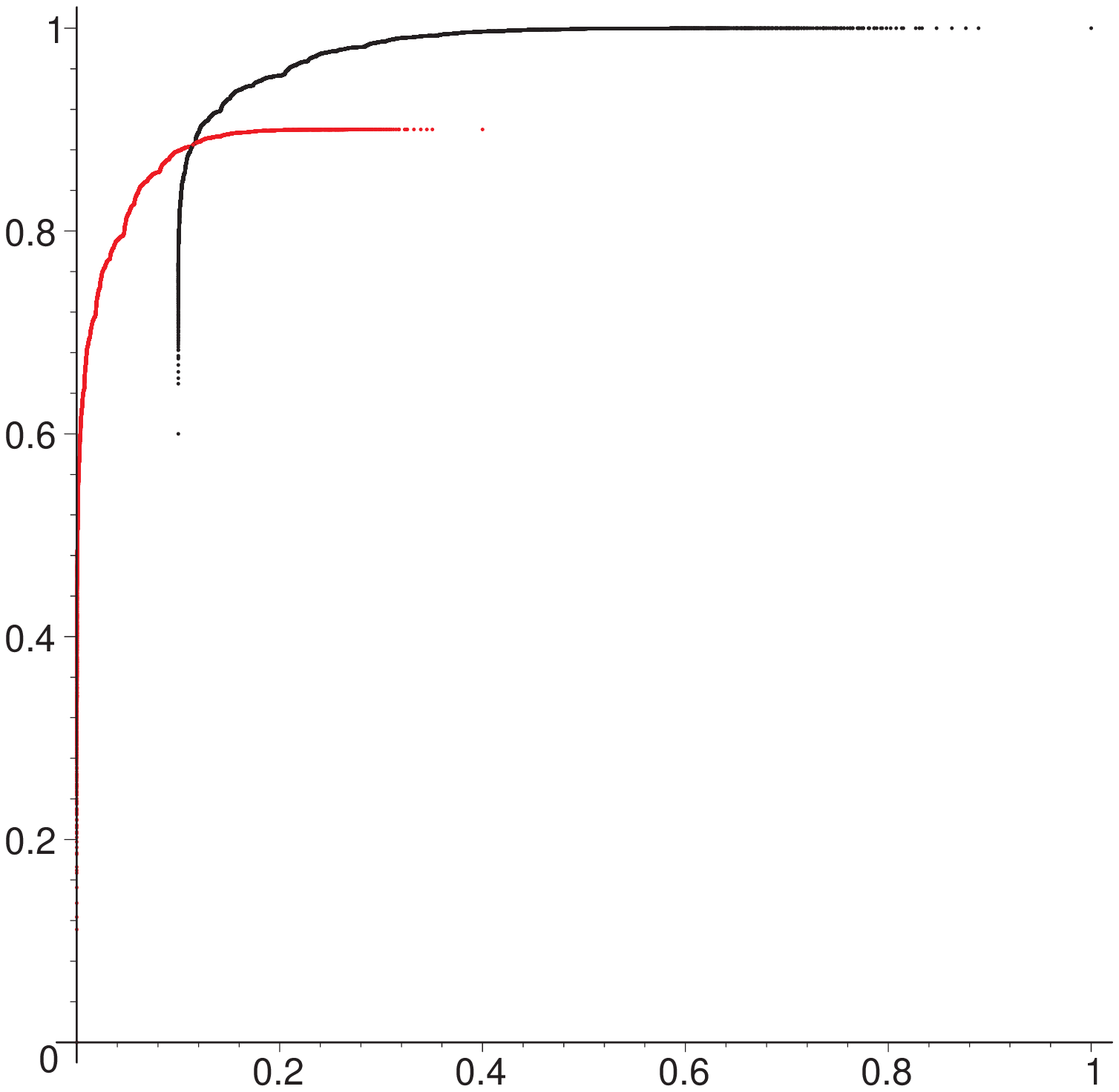}}
\centering \scalebox{0.4} {\includegraphics{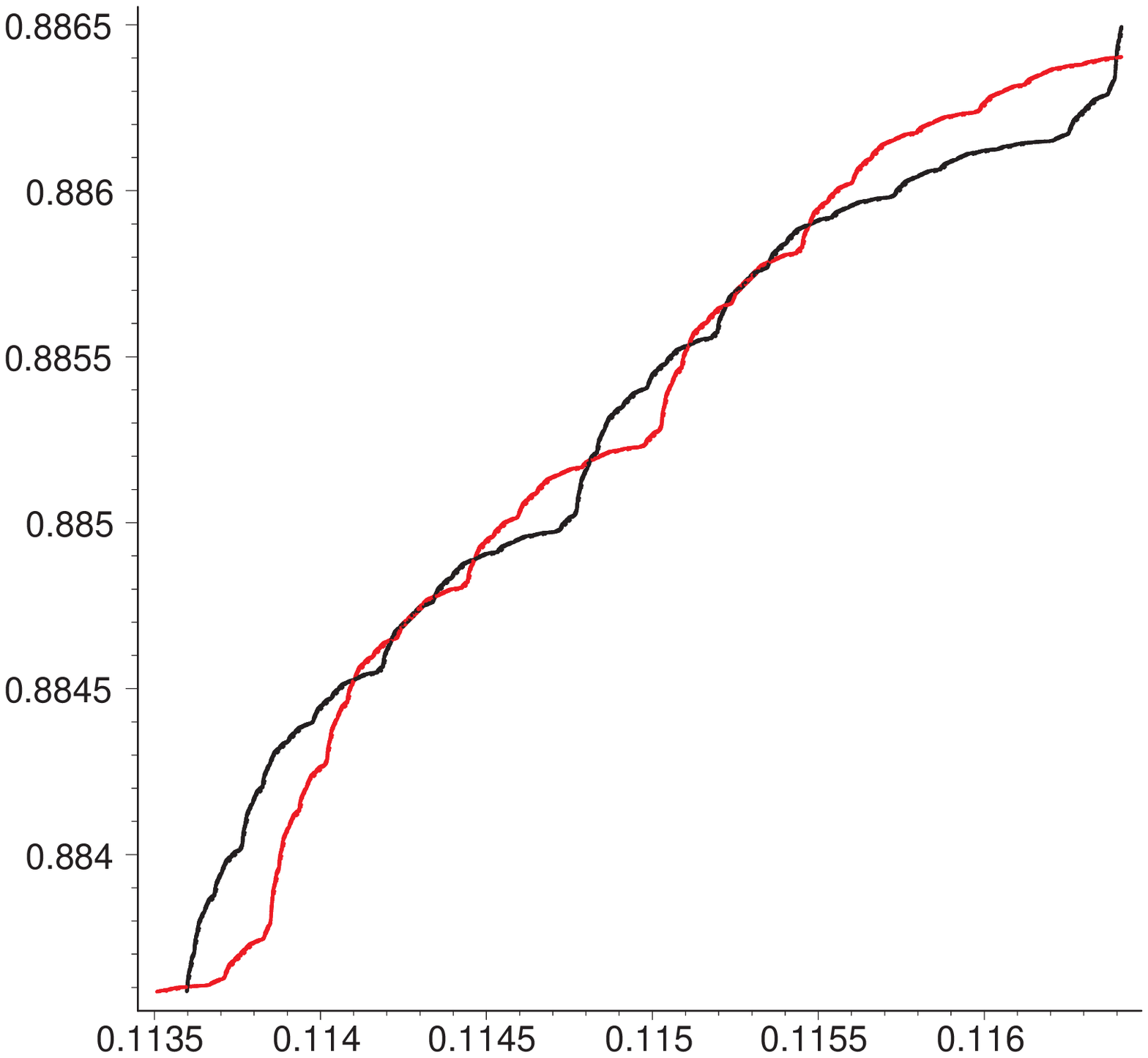}}
\caption{The set $T_0(B_{0.4,0.9})\cup T_1(B_{0.4,0.9})$, magnified in the neighbourhood of the intersection area.}\label{fig:neq}
\end{figure}

Theorem~\ref{thm1} is proved.

\section{Computational results on $G$}
\label{sec:comp}

We will first prove a special case, and then discuss how this can be extended.

Consider our example $\lambda = 0.4$ and $\mu = 0.9$ from before.
Consider an infinite word $a = (a_i)_{i=1}^\infty \in \{0, 1\}^\BbN$.
We define $pt_a$ as the limit
    $\lim_{n \to \infty} T_{a_1} \circ T_{a_2} \circ \dots \circ T_{a_n}$.
We note that the limit it independent of the point upon which we act.

Let $(x_1, y_1) = pt_{(01)^\infty}$ and $(x_2, y_y) = pt_{1 (0)^\infty}$.

We make two claims.
\begin{enumerate}
\item $(x_1, y_1) \in B_{0.4, 0.9}$.
\item $x_1 < x_2$ and $y_1 > y_2$.
\end{enumerate}

These two claims are sufficient to prove $(0.4, 0.9) \in G$.
To see this, we note that $B_{0.4, 0.9}$ is a continous increasing 
    function bounding $\partial_{top}(A_{0.4, 0.9})$ from below.

To see the first claim, we notice that
     \[ x_1 = \frac{\lambda(\mu-1)}{\lambda \mu -1} \text{ and }
        y_1 = \frac{\mu(\lambda-1)}{\lambda \mu -1} \]
We see that
    $x_1 + y_1 = \frac{2\lambda \mu - \lambda -\mu}{\lambda \mu -1}
    = 0.90625 < 1$.
We further see that
    \[ T_1(x_1, y_1) = \left(\frac{\mu-1}{\lambda\mu-1},
                             \frac{\lambda-1}{\lambda\mu-1}\right) \]
We further have that
    \[ \frac{\mu-1}{\lambda\mu-1} +  \frac{\lambda-1}{\lambda\mu-1}
       = \frac{\lambda+\mu-2}{\lambda \mu-1} = 1.09375 > 1 \]

We easily see that $(x_1, y_1) \in Y_0$, and by induction we have that
    $(x_1, y_1) \in Y_n$ for all $n$.
This proves that $(x_1, y_1) \in B_{0.4, 0.9} = \cap Y_n$.

The second claim follows as
\[
x_2  = 1-\mu = 0.1 >  x_1 = 0.0625 \] and
\[ y_2  = 1-\lambda = 0.4 <  x_1 = 0.84375. \]

We notice that the inequalities needed to ensure this result are true
    for more than this specific value of $\lambda$ and $\mu$.
In particular, so long as $pt_{(01)^\infty}$ is below the line $x+y=1$, and
    $T_1(pt_{(01)^\infty})$ is above the line $x+y=1$ we have that
    $pt_{(01)^\infty}$ is in $B_{\lambda, \mu}$.
Similar, the necessary inequality between $pt_{(01)^\infty}$ and
    $pt_{1(0)^\infty}$ can be easily checked for ranges of $\lambda$ and
    $\mu$.
For example, we can easily show a more general result that
    for all $(\lambda, \mu) \in [3/8, 7/16] \times [7/8, 15/16]$ that
    $pt_{(01)^\infty}$ is on $B_{\lambda, \mu}$ and that the necessarily
    in equality holds for $pt_{(01)^\infty}$ and
    $pt_{1 (0)^\infty}$.
That is, $[3/8, 7/16] \times [7/8, 15/16] \subset G$.

We computationally search for regions $R$ and eventually periodic
    $a \in \{0,1\}^{\BbN}$ such that
\begin{enumerate}
\item $pt_{a} \in B$ for $(\lambda, \mu) \in R$
\item $pt_{a}$ satisfies the desired inequality with one of
    $pt_{0(1)^\infty}$ or $pt_{1(0)^\infty}$.
\end{enumerate}

This data is collected on \cite{homepage}.

A graph of the proven regions is given in Figure \ref{fig:97}.
Each rectangle indicates a different region with a (potentially) different
    eventually periodic word $a$.
Some of these regions are very small, with a width of $1/2^{12}$.

\begin{figure}
\centering \scalebox{0.6} {\includegraphics{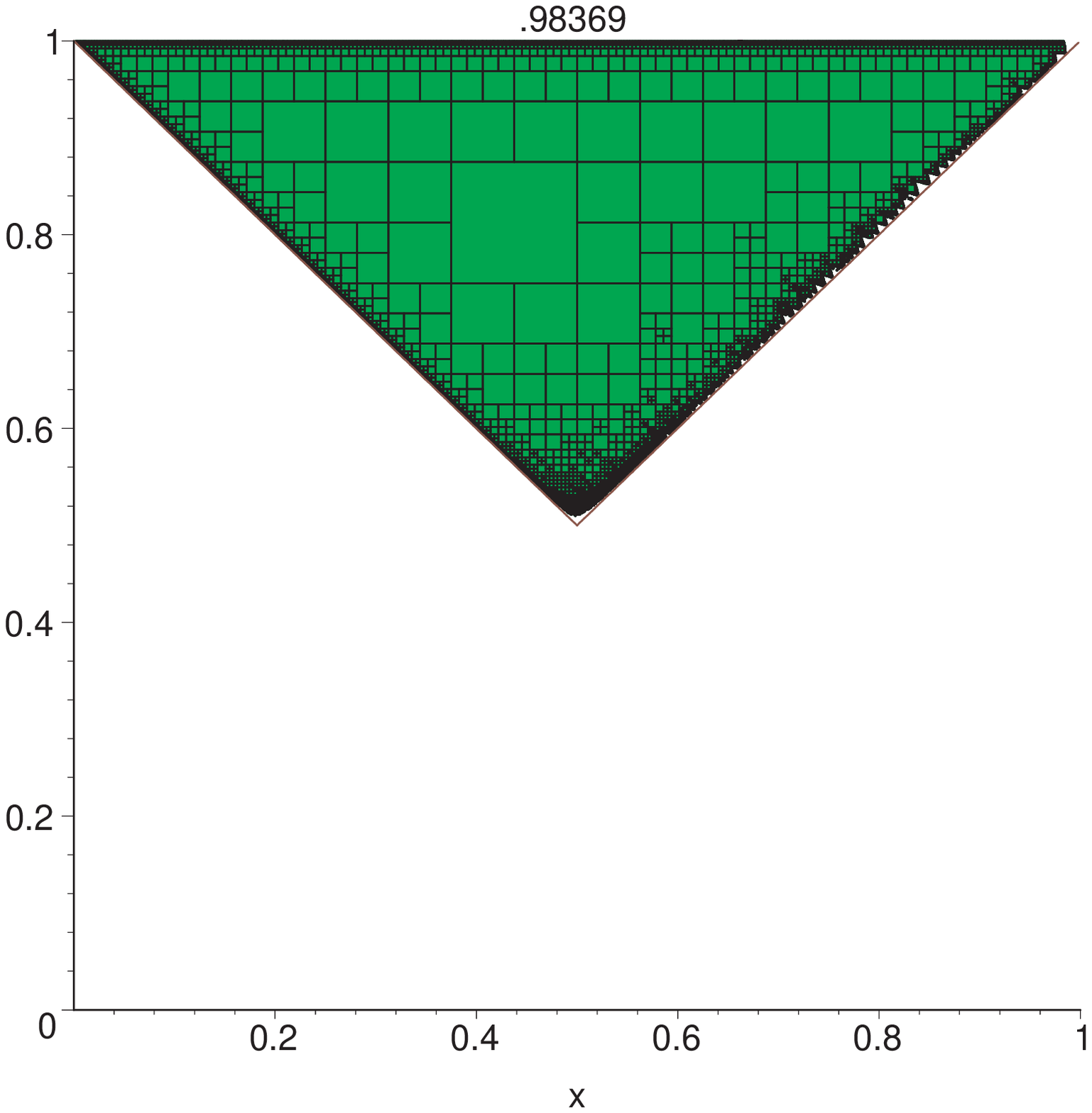}}
\caption{Region $G$}
\label{fig:97}
\end{figure}

\section{Proof of Theorem \ref{thm2}}
\label{sec:thm2}

Consider again our example with $\lambda = 0.4$ and $\mu =0.9$.
Let $(x_0,y_0) = \left(\frac{\lambda (\mu-1)}{\lambda \mu -1}, \frac{\mu(\lambda-1)}{\lambda \mu -1}\right)$,
  the solution to $T_0 T_1 (x_0, y_0) = (x_0, y_0)$.
Let $X = [x_0, 1] \times [y_0, 1]$.
Consider the sub-IFS generated by $\{T_0 T_1, T_1\}$.

It is easy to see that $T_0 T_1 (X) \subset X$,  $T_1 (X) \subset X$ and $T_0 T_1(X) \cap T_1(X) = \emptyset$.
Hence this sub-IFS satisfies the rectangular open set condition.
See Figure \ref{fig:ROS}.

\begin{figure}
\centering \scalebox{0.6} {\includegraphics{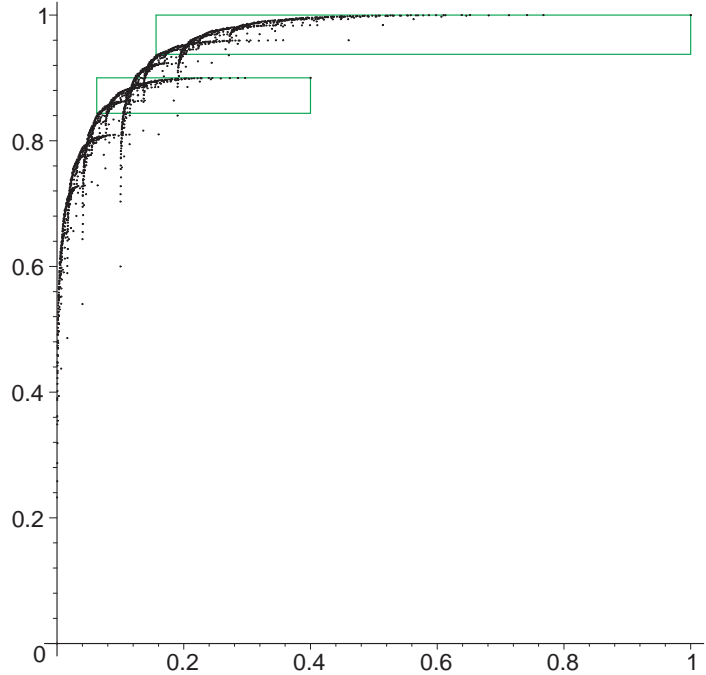}}
\caption{Rectangular Open Set Condition}
\label{fig:ROS}
\end{figure}

We further observe that the projection of this sub-IFS onto the $x$-axis is the interval $[x_0, 1]$, hence 
    dimension $1$.
Lastly, we see that the contractions are both of the form $(x, y) \to (a x + b, c x + d)$ where $a > c$.
Hence by Feng and Wang \cite{FW} we can compute the dimension for this sub-IFS.

In this case $s \approx 1.244273660$ which satisfies
    \[ (\lambda \mu)^s  + \mu^{s-1} \lambda = 1. \]

As the dimension of the full IFS is strictly less than 2, we see that it has no interior.  
Hence $\dim(K) = \dim(\partial(K)) \geq 1.244273660 > 1$.

More generally, let $w_1, w_2, \dots, w_n\in \{0,1\}^*$ such that 
    $|w_i|_1 \geq |w_i|_0$ for $i = 1, 2, \dots, n$, with at least
    one of the inequalities being strict.
Let $(x_i, y_i)$ be the fixed point of $T_{w_i}$ for $i = 1, \dots, n$.
Define $x_{\min} = \min (x_1, x_2, \dots, x_n)$, and similarly 
    $x_{\max}, y_{\min}$ and $y_{\max}$.
Define $X = [x_{\min}, x_{\max}] \times [y_{\min}, y_{\max}]$.
We see by construction that $T_{w_i}(X) \subset X$.
If we have that 
\begin{itemize}
\item $T_{w_i}(X) \cap T_{w_j}(X) = \emptyset$ for $i \neq j$,
\item The projection of the attractor of $\{T_{w_1}, T_{w_2}, \dots, T_{w_n}\}$
     onto the first coordinate is $[x_{\min}, x_{\max}]$,
\item The projections onto the first coordinate have non-trivial overlap for 
    some $T_{w_i}$ and $T_{w_j}$, $i \neq j$.
\end{itemize}
then the same argument will hold.
That is, by the rectangular open set condition $\{T_{w_1}, T_{w_2},  \dots,
    T_{w_n}\}$ has 
    dimension greater than $1$.
To see this we have from Feng and Wang the dimension satisfies
    \[ \sum b_i^{s-1} a_i = 1.\]
The left hand side is a decreasing function with respect to $s$ and evaluates
    to a value strictly greater than $1$ as $s = 1$, hence $s > 1$.

We use this argument with the sets
Extending this arguement to \[ \{T_0 T_1^m, T_1^n\}\hspace{0.5 in}\text{and}
    \hspace{0.5 in}\{T_0 T_1^m, T_1 T_0, T_1^2 T_0 \dots, T_1^n T_0\}. \] 
This covers greater than \Htrue\ of the parameter space.
See Figure \ref{fig:ROS2}.

\begin{figure}
\centering \scalebox{0.6} {\includegraphics{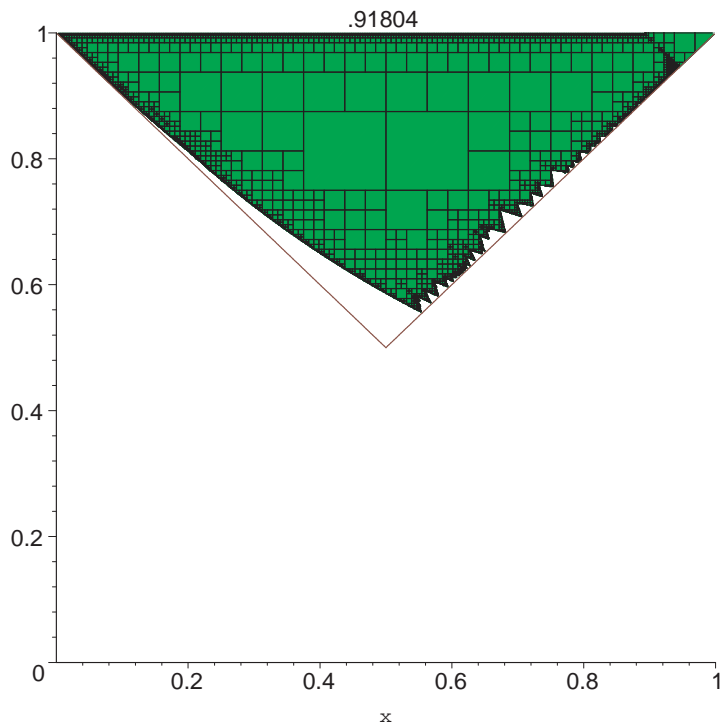}}
\caption{Rectangular Open Set Condition}
\label{fig:ROS2}
\end{figure}

One problem with this technique, is it doesn't seem to cover all cases.
In particular, for $\lambda = 0.45, \mu = 0.6$ we cannot find a combination
    of $m$ and $n$ such that $\{T_0 T_1^m, T_1^n\}$ or 
    $\{T_0 T_1^m, T_1 T_0, T_1^2 T_0 \dots, T_1^n T_0\}$ has the desired 
    properties.
We have also searched more generally for this particular case.
Letting $L$ being the set of all words $w$ of length up to $20$ where 
    $|w|_1 \geq |w|_0$,
    we have searched through all subsets of $L$ for possible proof using this 
    technique and found none.
Computationally, the dimension of $A_{0.45, 0.6}$ appears to be $1.08$.

Visually there seems to be a natural limit to these techniques, and a visible 
    gap between $\lambda+\mu=1$ and the cases that can be proved.


\section{Proof of Theorem \ref{thm3}}
\label{sec:thm3}

We will prove a more general result.

\begin{theorem}
  Let a two-dimensional IFS $\Phi$ be $\{T_0x=M_0x, T_1x=M_1 x+u\}$, where $M=M_0M_1=M_1M_0$. If $M$ is not scalar and $|\det M|\ge 1/\sqrt2$, then the attractor of $\Phi$ has a non-empty interior. If $M$ is scalar, the same result holds if $|\det M_0^2 M_1|\ge1/\sqrt2$.
\end{theorem}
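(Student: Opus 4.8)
The plan is to follow the technique of \cite{HS-2D}, specialised to the commuting situation. As usual, writing $T_w=T_{w_1}\circ\cdots\circ T_{w_n}$ for $w\in\{0,1\}^n$, it suffices to produce a nondegenerate parallelogram $Q$ and an integer $n$ with $Q\subseteq\bigcup_{|w|=n}T_w(Q)$: iterating this inclusion and letting $n\to\infty$ yields $Q\subseteq A$, so $A$ has non-empty interior.

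First I would pass to a sub-IFS with a single linear part. Put $L=M$ if $M$ is not scalar and $L=M_0^2M_1=M_0M$ if $M$ is scalar. Since $M_0$ and $M_1$ commute, the linear part of $T_w$ depends only on the digit counts of $w$, so one can pick two compositions of $T_0,T_1$ sharing linear part $L$, say $x\mapsto Lx+t_0$ and $x\mapsto Lx+t_1$ (in the scalar case, e.g.\ $T_1T_0^2$ and $T_0T_1T_0$, with $t_1-t_0=(I-M_0)u$). Their two-map sub-IFS has attractor contained in $A$, and after a translation this attractor is $E$, the attractor of $\{x\mapsto Lx,\ x\mapsto Lx+v\}$ with $v=t_1-t_0$. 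One checks that $L$ is not scalar and that $v$ can be chosen so that it is not an eigenvector of $L$ --- for non-scalar $M$ this is exactly the hypothesis, and for scalar $M$ one uses that $L=M_0^2M_1$ is non-scalar unless $M_0,M_1$ are both scalar, in which case $\Phi$ is a similarity IFS and is dealt with separately. As $L$ is a product of contractions, all of its eigenvalues have modulus $<1$; combined with $|\det L|\ge1/\sqrt2$ this forces \emph{both} eigenvalues of $L$ to have modulus strictly between $1/\sqrt2$ and $1$.

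Next comes the core estimate; I describe it when $L$ has real eigenvalues --- the case relevant to Theorem~\ref{thm3} --- the complex case, in which $L$ is a scaled rotation through an angle that is not a multiple of $\pi$, being analogous. We have $E=\{\sum_{k\ge0}\varepsilon_kL^kv:\varepsilon\in\{0,1\}^{\mathbb{N}}\}$; separating even and odd indices gives $E=E'+LE'$, where $E'$ is the attractor of $\{x\mapsto L^2x,\ x\mapsto L^2x+v\}$. This squares the contraction, so the governing quantity becomes $|\det L^2|=|\det L|^2\ge1/2$, and moreover the eigenvalues of $L^2$ are positive reals lying in $[1/2,1)$. Hence, in an eigenbasis of $L$, each of the two coordinate projections of $E'$ is a full interval, by the classical fact that $\{\sum_{k\ge0}\delta_kc^k:\delta\in\{0,1\}^{\mathbb{N}}\}$ is an interval whenever $c\in[1/2,1)$. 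I would then combine the two directions furnished by $E'$ with the additional direction furnished by the $LE'$ summand, using transversality of $v$, to show that the finite translate sets $D_n=\{\sum_{k<n}\varepsilon_kL^kv:\varepsilon\in\{0,1\}^n\}$ eventually become fine enough to cover a parallelogram: there is a parallelogram $Q$ with $\bigcup_{d\in D_n}(L^nQ+d)\supseteq Q$ for all large $n$. This gives $Q\subseteq E\subseteq A$, and the scalar-$M$ clause follows by the same argument with $L=M_0^2M_1$.

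The hard part is this covering step. The combined area $2^n|\det L^n|=(2\,|\det L|)^n$ of the copies $L^nQ+d$ tends to infinity, but large area does not by itself force an open set to be covered; one must control the geometry of the overlaps and show that the offsets $\{L^kv\}$ are spread out in \emph{both} eigendirections of $L$. This is precisely where the constant $1/\sqrt2$ --- via $|\det L|^2\ge1/2$ after the even/odd pairing --- and the transversality of $v$ are used. The remaining ingredients, namely the reduction to a parallelogram, the passage to a common linear part, and the one-dimensional Bernoulli-set facts, are routine.
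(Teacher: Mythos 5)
Your opening reduction is exactly the one the paper makes: use commutativity to pass to a two-map sub-IFS whose maps share a common linear part $L$ ($L=M_0M_1$ in the non-scalar case via $\{T_0T_1,T_1T_0\}$, and $L=M_0^2M_1$ in the scalar case via two length-three words with the same digit counts), and note that up to translation its attractor is that of $\{x\mapsto Lx,\ x\mapsto Lx+v\}$ for an explicit $v$ built from $(I-M_0)u$. At that point the paper's proof consists of a single further step: quote the main theorem of \cite{HS-2D}, which is stated precisely for two-map affine IFS with equal linear parts and $|\det L|\ge 1/\sqrt2$, and which supplies the non-empty interior. Your observation that $M_0^2M_1$ is non-scalar unless both $M_0$ and $M_1$ are scalar is correct and is the reason the scalar case of the statement is phrased the way it is.

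Where you diverge is that you then attempt to re-derive the \cite{HS-2D} theorem rather than invoke it, and your derivation stops at what you yourself label ``the hard part'': showing that the translate sets $D_n=\{\sum_{k<n}\varepsilon_kL^kv\}$ eventually cover a parallelogram. That covering argument is the entire content of the cited theorem --- it occupies a separate paper --- and the ingredients you do supply (the even/odd splitting $E=E'+LE'$, the fact that $|\det L|^2\ge 1/2$, and the one-dimensional interval criterion for $\{\sum\delta_kc^k\}$ with $c\in[1/2,1)$) do not by themselves produce it; as you correctly note, unbounded total area of the copies $L^nQ+d$ does not force them to cover an open set. So read as a self-contained argument your proposal has a genuine gap at that step; read as a proof modulo the quoted result of \cite{HS-2D} it is complete and coincides with the paper's. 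The cleanest repair is to state the theorem from \cite{HS-2D} that you need, including its non-degeneracy hypotheses on $L$ and $v$ (which is where ``$M$ not scalar'', respectively ``$M_0^2M_1$ not scalar'', enters), verify them for your sub-IFS, and cite it rather than reprove it.
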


\begin{proof}
  If $M$ is not scalar, then we consider the sub-IFS $\{T_0T_1,T_1T_0\}$. Both maps are given by the same matrix $M$, whence the first claim follows from the main result of our previous work \cite{HS-2D}.
  
 If $M$ is scalar, then we consider $\{T_0T_1T_0, T_0^2 T_1\}$, also with the same matrix and apply the same result.
\end{proof}

Return to Theorem~\ref{thm3}. Both matrices here are diagonal so they commute. Their product is scalar, so we apply the second case of the previous theorem. We thus get the condition $(\la\mu)^3\ge 1/\sqrt2$.

\end{document}